\numberwithin{equation}{section}
\theoremstyle{definition}
\newtheorem{theorem}{\bf Theorem}[section]
\newtheorem{lemma}[theorem]{\bf Lemma}
\theoremstyle{definition}
\newtheorem{definition}[theorem]{\bf Definition}
\newtheorem{proposition}[theorem]{\bf Proposition}
\newcommand{\mm}[1]{\mathrm{#1}}
\newcommand{\mb}[1]{\mathbb{#1}}
\newcommand{\mc}[1]{\mathcal{#1}}
\begin{document}

\title[Classification for smooth manifolds looking like $\mathbb{CP}^3\times S^7$]{Classification for smooth manifolds looking like $\mathbb{CP}^3\times S^7$}

\author[Wen Shen]{Wen Shen}
\address{College of Mathematics and Physics, Wenzhou University, Wenzhou, P.R.China}
\email{shenwen121212@163.com}

\begin{abstract}
In this paper, we classify simply connected closed smooth $13$-dimensional manifolds whose cohomology ring is isomorphic to that of $\mb{CP}^3\times S^7$, up to diffeomorphism, homeomorphism, and homotopy equivalence. Furthermore, if such a manifold satisfies certain conditions, either itself or its connected sum with an exotic 
$13$-sphere $\Sigma^{13}$ admits a Riemannian metric of non-negative sectional curvature. As an additional application of our classification, we classify the diagonal $S^1$-actions on $S^7\times S^7$.    
\end{abstract}

\subjclass[2020]{Primary 57R19,  57R50, 53C20}

\maketitle

\section{Introduction}\label{intro}

Classification of manifolds with prescribed cohomology rings (up to diffeomorphism, homeomorphism, or homotopy equivalence) constitutes a central problem in geometric topology. The Poincar\'{e} conjecture provides a seminal example: for 
$n\ge 3$, any $n$-dimensional manifold sharing the homology groups of $S^n$ is homeomorphic to the 
$n$-sphere. In the general case, Wall \cite{Wall1962,Wall1967} investigated $(s-1)$-connected $2s$-manifolds and $(s-1)$-connected $(2s+1)$-manifolds.
 Specific dimensional cases yield richer classification results: Barden \cite{Barden} achieved complete classification of simply connected 5-manifolds, while Kreck and Su \cite{KreckSu} considered certain nonsimply connected 5-manifolds. One of their results is the classification for closed oriented $5$-manifolds $M$ with $\pi_1(M)$ a free group and $H_2(M;\mb{Z}) = 0$.
In dimension 6, Wall \cite{Wall1966} classified closed  simply connected spin manifolds with torsion-free homology, later extended by Jupp \cite{Jupp} to non-spin cases. Kreck and Stolz \cite{KS1, KS} gave a classification for the $7$-manifolds modeled on Aloff-Wallach spaces \cite{Aloff}, notable as smooth manifolds admitting positive sectional curvature. The homotopy classification of Aloff-Wallach spaces was given by Kruggel \cite{Kruggel}. Further classifications for 7-manifolds with specialized cohomology rings appear in \cite{CroNor,K2018}.

Throughout this paper, let $H^i(-)$ and $H_i(-)$ denote the integral cohomology and homology groups $H^i(-;\mathbb{Z})$ and $H_i(-;\mathbb{Z})$ respectively unless otherwise specified.

We focus on simply connected, closed, smooth $13$-manifolds ${M}$ satisfying $H^\ast({M})\cong H^\ast(\mb{CP}^3\times S^7)$ where 
$\mb{CP}^ 
3$ denotes the 
$3$-dimensional complex projective space. A canonical family of such manifolds arises from the circle actions on $S^7\times S^7$ \cite{WZ}
defined by $\theta \cdot  (x,y) = (e^{il\theta}x,e^{-ik\theta}y)$ with $\mm{gcd}(k,l)=1$. Certain considered manifolds exhibit rich geometric properties:
From \cite{DDGR}, there are infinitely many circle quotients of $S^7 \times S^7$ admitting metrics with $\mm{Ric}_2 > 0$. 
Furthermore, Kerin \cite{Kerin} gave an example with almost positive sectional curvature.

 Before stating the main theorem, we first introduce $13$-dimensional homotopy spheres, denoted as $\Sigma^{13}$. A homotopy sphere $\Sigma^{13}$ is a smooth manifold that is homotopy equivalent to the standard sphere $S^{13}$. By Poincar\'{e} conjecture, $\Sigma^{13}$ is homeomorphic to $S^{13}$. We define $\Sigma^{13}$ to be an exotic sphere if $\Sigma^{13}$ is not diffeomorphic to $S^{13}$. 
 The diffeomorphism classes of homotopy spheres $\Sigma^{13}$ form a group $\Theta_{13}\cong \mb{Z}_3$ \cite{KervaMilnor} under the operation of connected sum. The standard sphere $S^{13}$ corresponds to the $0$ element of this group, arbitrary exotic sphere $\Sigma^{13}$ represents a generator of $\Theta_{13}$. 

Recall the ring structure $H^\ast( M)\cong H^\ast(\mb{CP}^3\times S^7)$. Note that $x^2=-x\cup -x$ where $x$ is a generator of $H^2( M)$.
We consistently choose the cup product $x^2$ as the generator of $H^4( M)$. Consequently, we can represent the first Pontrjagin class  $p_1(M)$ of $M$ using just an integer.

Now that we have laid the necessary groundwork, we are ready to state the main theorem of this paper.
 
 \begin{theorem}\label{1.1}
{\it	Let $ M$, $ M^\prime$ be simply connected, closed, smooth $13$-manifolds with the same cohomology ring as $H^\ast(\mb{CP}^3\times S^7)$. 
\begin{enumerate}
	\item $M$ is homeomorphic to $M^\prime$ if and only if $p_1( M)=p_1(M^\prime)$. 
		\item  If $p_1( M)=p_1( M^\prime)\not\equiv 0\pmod 2$, there exists a homotopy sphere $\Sigma^{13}$ so that $ M$ is diffeomorphic to $ M^\prime \# \Sigma^{13}$.		
		\item If $p_1( M)=p_1( M^\prime)$ is coprime to $6$, $ M$ is diffeomorphic to $ M^\prime$.
		
		\item $ M$ is homotopy equivalent to $ M^\prime$ if and only if $p_1( M)\equiv p_1( M^\prime)\pmod {24}$.
		\item If $p_1( M)=p_1( M^\prime)\equiv 4\pmod{24}$, $ M$ is diffeomorphic to $ M^\prime$.
\end{enumerate}
}
\end{theorem}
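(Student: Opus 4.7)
The plan is to apply Kreck's modified surgery theory, in the spirit of the Kreck--Stolz classification of Aloff--Wallach spaces \cite{KS1, KS} and the analogous seven-dimensional work in \cite{K2018}. First I would identify the normal $2$-type of $M$: Hurewicz gives $\pi_2(M) \cong \mathbb{Z}$, and the relation $w_2(M)^2 \equiv p_1(M) \pmod 2$ forces $w_2(M) \in H^2(M;\mathbb{Z}/2) \cong \mathbb{Z}/2$ to be nonzero precisely when $p_1(M)$ is odd. Accordingly the normal $2$-type $B_\epsilon \to BSO$ has $2$-type $\mathbb{CP}^\infty$ and is obtained in the spin case as the trivial product $\mathbb{CP}^\infty \times B\mathrm{Spin}$ and in the non-spin case as the corresponding $\mathrm{Spin}^c$-twisted version. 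A normal $2$-smoothing $\bar\nu: M \to B_\epsilon$ always exists and is unique up to explicit automorphisms of $B_\epsilon$ whose action on the invariant $p_1$ is by sign.

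Next I would compute the bordism group $\Omega_{13}(B_\epsilon)$ via the Atiyah--Hirzebruch spectral sequence and show that $[M,\bar\nu] \in \Omega_{13}(B_\epsilon)$ is determined by $p_1(M)$ modulo the $\Theta_{13} \cong \mathbb{Z}/3$-action from connected sum with a $13$-dimensional homotopy sphere. Consequently $p_1(M) = p_1(M')$ produces a $B_\epsilon$-bordism between $M$ and $M' \# \Sigma^{13}$ for a suitable $\Sigma^{13}$. Since $\dim M = 13$ is odd and $\pi_1(B_\epsilon) = 1$, Kreck's theorem converts this bordism into a smooth $h$-cobordism, and the $h$-cobordism theorem yields the diffeomorphism $M \cong M' \# \Sigma^{13}$ asserted in (2); the parity hypothesis $p_1 \not\equiv 0 \pmod 2$ places us in the non-spin normal $2$-type, where $\Theta_{13}$ is the only smooth indeterminacy. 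Statement (1) now follows: necessity is Novikov's topological invariance of $p_1$, and sufficiency is a consequence of (2) together with $\Sigma^{13}$ being homeomorphic to $S^{13}$; the even-$p_1$ case is handled by the parallel argument in the topological category, where the $\Theta_{13}$-indeterminacy collapses.

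For (3) and (5), I would extract from the bordism data a secondary $\Theta_{13}$-valued invariant of Eells--Kuiper/$\eta$-invariant type, built from $p_1$ and characteristic/spectral numbers of an auxiliary coboundary, and show by direct computation that the arithmetic conditions $\gcd(p_1,6) = 1$ and $p_1 \equiv 4 \pmod{24}$ each force the value of this invariant to be determined by $p_1(M)$ alone; hence $p_1(M) = p_1(M')$ gives matching secondary invariants, so the exotic sphere $\Sigma^{13}$ in (2) is trivial and $M \cong M'$. For (4), I would prove that $p_1 \pmod{24}$ is a homotopy invariant through a Postnikov-tower analysis of the $13$-type of $M$, in which the stable $3$-stem $\pi_3^s \cong \mathbb{Z}/24$ supplies the relevant $k$-invariant ambiguity; conversely, any prescribed change of $p_1$ by $24$ is realized through a homotopy-preserving twist along an element of $\pi_7(SO)$ acting on a model $S^7$-bundle over $\mathbb{CP}^3$. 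I expect the main technical obstacle to be the bordism computation combined with the careful bookkeeping of how $p_1$ and the secondary $\Theta_{13}$-valued invariant transform under the available choices (normal $2$-smoothing, orientation, and, where relevant, spin structure); once this bookkeeping is fixed, the arithmetic in (3)--(5) should reduce to direct modular calculations.
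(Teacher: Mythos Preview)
Your overall strategy---Kreck's modified surgery, an AHSS bordism computation, and tracking the $\Theta_{13}$-indeterminacy---is indeed the paper's strategy, but there is a genuine gap in your choice of normal type. The analogy with Kreck--Stolz is misleading here: for $7$-manifolds the normal $2$-type suffices because $[7/2]=3$, but for $13$-manifolds Kreck's theorem requires a normal $(q-1)$-smoothing with $q=6$, i.e.\ a $6$-connected map $M\to B$. Your normal $2$-smoothing into $B_\epsilon = \mathbb{CP}^\infty \times B\mathrm{Spin}$ (or its $\mathrm{Spin}^c$ twist) is only $3$-connected, so surgery below the middle dimension cannot be carried out and the step ``Kreck's theorem converts this bordism into a smooth $h$-cobordism'' does not go through. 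The paper instead takes $B = \mathrm{BO}\langle 8\rangle \times \mathbb{CP}^\infty$, equipped with $\gamma_8 \times \xi$ where $\xi = -n\mathcal{H}$ is chosen depending on $n = p_1(M)$ precisely so that $\mathscr{N}_M - f^*\xi$ acquires a $\mathrm{BO}\langle 8\rangle$-structure; the lift $(\nu,f)\colon M \to B$ then induces isomorphisms on $H_i$ for $i \le 6$. The bordism group in play is thus $\Omega_{13}^{\mathrm{O}\langle 8\rangle}(\xi)$---one for each value of $p_1$---rather than a single $\Omega_{13}^{\mathrm{Spin}}(\mathbb{CP}^\infty)$.

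Two further divergences from your outline. First, even with the correct $B$, showing that $[M,(\nu,f)]$ lies in the low filtration $F^{4,9}$ of the AHSS (Proposition~\ref{filtraofM}) is not automatic; the paper achieves it by constructing auxiliary spaces $X$ with $H^*(X) \cong \mathbb{Z}[x]/kx^4$, $k$ coprime to a given prime (Proposition~\ref{Xconstruct}), and comparing spectral sequences to push the class down at $p=2$ and $p=3$ separately. Second, neither (1) nor (5) is handled by a secondary $\eta$-type invariant. For (1) the paper passes to the PL category, where $\pi_i(\mathrm{MPL}\langle 8\rangle)=0$ for $i=9,11,13$ forces the PL analogue $\mathbb{F}^{4,9}=0$ outright, yielding PL homeomorphism and hence homeomorphism. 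For (5) the paper uses the classical surgery exact sequence for $\mathbb{CP}^3 \times S^7$: the residual $\mathbb{Z}_2$ coming from the image of $[\mathbb{CP}^3\times S^7, \mathrm{TOP}/O]$ in $[\mathbb{CP}^3\times S^7, G/O]$ is absorbed by composing with an explicit self-homotopy-equivalence $f_{\nu^2}$, after a spectral-sequence check that $f_{\nu^2}^{-1*}$ fixes the normal invariant.
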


We primarily employ surgery theory \cite{K} to establish the proofs of (1), (2) and (3) in Theorem \ref{1.1}. Item (4) follows from the homotopy classification in \cite{Shen}.
If a manifold $M$ as in Theorem \ref{1.1} satisfies $p_1(M)=4$, then $M$ is homeomorphic to $\mb{CP}^3\times S^7$ by Theorem \ref{1.1} (1). By \cite[Theorem 3.33]{BasuKaSa}, it is diffeomorphic to $\mb{CP}^3\times S^7$. Here, we combine the method in \cite{BasuKaSa} with the spectral sequence to prove Theorem \ref{1.1} (5).

In Riemannian geometry, established results demonstrate how curvature governs the topological structure of manifolds. Classical theorems such as the \textit{Bonnet-Myers Theorem} and the \textit{Synge Theorem} exemplify this geometric control principle.
Conversely, a fundamental question persists regarding the inverse direction: Which smooth manifolds admit metrics with $\mm{sec}\ge 0$ or $\mm{sec}>0$? This paper contributes to this inquiry by developing topological criteria that constrain potential metric realizations. Specifically, we establish:

\begin{theorem}\label{1.3}
	{\it Let $M$ be a simply connected, closed, smooth manifold with the same cohomology ring as $H^\ast(\mb{CP}^3\times S^7)$. Assume $p_1({M})\ge 4$.
	\begin{enumerate}
		\item If $w_2(M)\ne 0$, then there exists a homotopy $13$-sphere $\Sigma^{13}$ such that ${M}\#\Sigma^{13}$ admits a Riemannian metric of non-negative sectional curvature.
		\item If $\mm{gcd}(p_1({M}), 6)=1$ or $p_1({M})\equiv 4\pmod{24}$, then ${M}$ itself admits a Riemannian metric of non-negative sectional curvature.
	\end{enumerate}
	}
\end{theorem}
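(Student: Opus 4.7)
The plan is to combine the diffeomorphism classification of Theorem~\ref{1.1} with an explicit family of non-negatively curved models. The natural candidates are the Wang--Ziller circle quotients $M_{k,l}=(S^7\times S^7)/S^1_{k,l}$ of the introduction, defined for $\mm{gcd}(k,l)=1$ by the isometric $S^1$-action $\theta\cdot(x,y)=(e^{il\theta}x,\,e^{-ik\theta}y)$. Since the round product metric on $S^7\times S^7$ has $\mm{sec}\ge 0$, O'Neill's submersion formula endows each $M_{k,l}$ with a metric of non-negative sectional curvature. The strategy is to identify $M$, or $M\#\Sigma^{13}$ for an appropriate $\Sigma^{13}\in\Theta_{13}$, with some $M_{k,l}$ using Theorem~\ref{1.1}.

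The computational heart is the Pontrjagin class of $M_{k,l}$. Viewing $\pi\colon M_{k,l}\to\mb{CP}^3\times\mb{CP}^3$ as a principal $S^1$-bundle with Euler class $e=lx_1-kx_2$, the Gysin sequence together with $\mm{gcd}(k,l)=1$ gives $H^*(M_{k,l})\cong H^*(\mb{CP}^3\times S^7)$ as graded rings and shows that a generator $u\in H^2(M_{k,l})$ satisfies $\pi^*x_1=ku$, $\pi^*x_2=lu$. Because the vertical line bundle of a principal $S^1$-bundle is trivial,
\[
TM_{k,l}\;\cong\;\pi^*T(\mb{CP}^3\times\mb{CP}^3)\oplus\varepsilon^1,
\]
so $p_1(M_{k,l})=\pi^*(4x_1^2+4x_2^2)=4(k^2+l^2)u^2$, i.e.\ $p_1(M_{k,l})=4(k^2+l^2)$ in the integer normalization fixed before Theorem~\ref{1.1}. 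The same decomposition gives $w_2(M_{k,l})=\pi^*w_2(\mb{CP}^3\times\mb{CP}^3)=0$, so every $M_{k,l}$ is spin.

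Each clause of Theorem~\ref{1.3} then reduces to an arithmetic matching plus Theorem~\ref{1.1}. For part~(2) with $p_1(M)\equiv 4\pmod{24}$, I would exhibit coprime $(k,l)$ satisfying $4(k^2+l^2)=p_1(M)$ and invoke Theorem~\ref{1.1}(5) to conclude $M\cong M_{k,l}$. For part~(2) with $\mm{gcd}(p_1(M),6)=1$ and for part~(1), once a non-negatively curved model $M'$ with matching invariants is in hand, Theorem~\ref{1.1}(3) yields $M\cong M'$ in the former case, while Theorem~\ref{1.1}(2) yields $M\cong M'\#\Sigma^{13}$ in the latter, so $M\#(-\Sigma^{13})\cong M'$ has non-negative sectional curvature.

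The step I expect to be the main obstacle is handling the odd $p_1$ regime. Since $4(k^2+l^2)$ is always even, the non-spin case of part~(1) and the $\mm{gcd}(p_1,6)=1$ case of part~(2) cannot be modeled by a Wang--Ziller quotient. Either these cases must be ruled out by a realizability obstruction --- presumably using the homotopy classification of \cite{Shen} to show that the realizable values of $p_1$ all lie in $4\mb{Z}$ --- or else an additional non-negatively curved construction, perhaps a biquotient of $S^7\times S^7$ by a more general isometric action, must be supplied whose first Pontrjagin class attains the required odd value. Reconciling the arithmetic constraints coming from Theorem~\ref{1.1} with the set of Pontrjagin classes realized by non-negatively curved manifolds is the principal technical point; the remainder is a routine composition of Theorem~\ref{1.1} with the O'Neill submersion argument.
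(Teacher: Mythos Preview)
Your overall strategy --- produce a non-negatively curved model $M'$ with $p_1(M')=p_1(M)$ and invoke the appropriate clause of Theorem~\ref{1.1} --- is exactly the paper's approach; its proof is the single sentence ``By \cite{Shen}, Theorem~\ref{1.3} follows,'' meaning that \cite{Shen} supplies the models and Theorem~\ref{1.1} supplies the identification, just as you outline.

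The only gap is the model family, and you have correctly located it. Of the two escapes you propose, the realizability obstruction is wrong --- manifolds of this cohomological type with odd $p_1$ do exist --- while the second (a more general isometric circle action on $S^7\times S^7$) is right and is precisely what \cite{Shen} uses. Concretely, replace $M_{k,l}$ by the sphere bundle of $V=\mathcal O(c_1)\oplus\cdots\oplus\mathcal O(c_4)$ over $\mb{CP}^3$: this is still a quotient $(S^7\times S^7)/S^1$, with $S^1$ acting by the Hopf action on the first factor and with weights $(c_1,\dots,c_4)$ on the second; the action is free because the Hopf action on the first factor already is, and it is isometric for the round metrics, so O'Neill again gives $\mm{sec}\ge 0$. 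The same tangent-bundle splitting you used yields $p_1(S(V))=4+\sum_i c_i^2$, so by Lagrange's four-square theorem every integer $\ge 4$ is realized. Section~\ref{constructforXab} of the present paper carries out this Pontrjagin-class calculation for the still more general diagonal quotients $M_{\bold{a,b}}$ (Lemma~\ref{firstPonclass-M}), of which your $M_{k,l}$ is the special case $\bold a=(l,l,l,l)$, $\bold b=(-k,-k,-k,-k)$. With this enlarged family in hand your argument goes through verbatim.
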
 

Finally, we consider certain $S^1$ actions on $S^7\times S^7$, whose quotient spaces are simply connected closed smooth manifolds with the same cohomology ring as $H^\ast(\mb{CP}^3\times S^7)$. 

Let $\bold a=(a_1,a_2,a_3,a_4)$, $\bold b=(b_1,b_2,b_3,b_4)\in \mb{Z}^4$.   
Given an $S^1_{\bold{a,b}}$-action on $S^7\times S^7$ by 
$\theta \cdot ((x_1,x_2,x_3,x_4),(y_1,y_2,y_3,y_4)):=$
$$((e^{\bold{i}a_1\theta}x_1,e^{\bold{i}a_2\theta}x_2,e^{\bold{i}a_3\theta}x_3,e^{\bold{i}a_4\theta}x_4),(e^{\bold{i}b_1\theta}y_1,e^{\bold{i}b_2\theta}y_2,e^{\bold{i}b_3\theta}y_3,e^{\bold{i}b_4\theta}y_4))$$
where $\theta\in S^1\cong \mb{R}/\{2k\pi\}$ ($k\in \mb{Z}$), $\Sigma_i ||x_i||^2=\Sigma_i ||y_i||^2=1$ ($x_i,y_i\in \mb{C}$).

\begin{proposition}\label{freeS1action}
	 {\it The $S^1_{\bold{a,b}}$-action on $S^7\times S^7$ is a free action if and only if $a_i,b_j\ne 0$ and $\mm{gcd}(a_i,b_j)=1$ for any $1\le i,j\le 4$.}
\end{proposition}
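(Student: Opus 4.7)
The plan is to analyze the stabilizer of each point and translate freeness into arithmetic conditions on $\mathbf{a}$ and $\mathbf{b}$. Identifying $S^1\cong \mathbb{R}/2\pi\mathbb{Z}$, the element $\theta$ fixes $(x,y)\in S^7\times S^7$ exactly when $a_i\theta\in 2\pi\mathbb{Z}$ for every $i$ with $x_i\ne 0$ and $b_j\theta\in 2\pi\mathbb{Z}$ for every $j$ with $y_j\ne 0$, so the stabilizer depends only on the supports $I(x)=\{i:x_i\ne 0\}$ and $J(y)=\{j:y_j\ne 0\}$. The core combinatorial input I will invoke is that for integers $m,n$, not both zero, the closed subgroup $\{\theta\in \mathbb{R}/2\pi\mathbb{Z}:m\theta,n\theta\in 2\pi\mathbb{Z}\}$ of $S^1$ is cyclic of order $\gcd(|m|,|n|)$, using the convention $\gcd(0,k)=|k|$; when $m=n=0$ the subgroup is all of $S^1$.

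For the reverse implication, suppose $a_i,b_j\ne 0$ and $\gcd(a_i,b_j)=1$ for all $i,j$. Given any $(x,y)\in S^7\times S^7$, the unit-norm conditions force $I(x)$ and $J(y)$ to be nonempty, so I may pick $i^\ast\in I(x)$ and $j^\ast\in J(y)$. The stabilizer of $(x,y)$ is contained in $\{\theta:a_{i^\ast}\theta,\,b_{j^\ast}\theta\in 2\pi\mathbb{Z}\}$, which by the combinatorial input has order $\gcd(|a_{i^\ast}|,|b_{j^\ast}|)=1$ and is therefore trivial; hence the action is free.

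For the forward implication, I would test freeness at the explicit points $(e_i,e_j)\in S^7\times S^7$, where $e_k$ denotes the $k$-th standard basis vector of $\mathbb{C}^4$. The stabilizer of $(e_i,e_j)$ is precisely $\{\theta:a_i\theta,\,b_j\theta\in 2\pi\mathbb{Z}\}$; this is all of $S^1$ if $a_i=b_j=0$, and otherwise has order $\gcd(|a_i|,|b_j|)$ under the convention above. Demanding triviality as $(i,j)$ ranges over all pairs extracts the asserted conditions $a_i,b_j\ne 0$ and $\gcd(a_i,b_j)=1$.

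No substantial obstacle arises; the argument reduces to the gcd identity for intersections of cyclic subgroups of $S^1$, which follows by writing $m=dm'$ and $n=dn'$ with $d=\gcd(|m|,|n|)$ and $\gcd(m',n')=1$, and checking that $m\theta, n\theta\in 2\pi\mathbb{Z}$ is equivalent to $d\theta\in 2\pi\mathbb{Z}$.
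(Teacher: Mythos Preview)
Your argument is correct and, for the sufficiency direction, follows essentially the same line as the paper: pick a nonzero coordinate on each sphere factor and use $\gcd(a_{i^\ast},b_{j^\ast})=1$ to force the stabilizer to be trivial. Your framing via the order of the subgroup $\{\theta:m\theta,n\theta\in 2\pi\mathbb{Z}\}$ is a clean packaging of the same computation the paper does by hand. One small addition worth noting: the paper's proof addresses only the ``if'' direction, whereas you also supply the converse by testing the action at the standard basis points $(e_i,e_j)$, which is the natural way to extract the conditions and completes the biconditional.
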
 

\begin{definition}
	{\it Let $G$ be a group, $X$ be a space, $\rho_1,\rho_2:G\times X\to X$ be $G$-actions on $X$. If there exists a self-homeomorphism $h:X\to X$ such that $h(\rho_1(g,x))=\rho_2(g,h(x))$ for any $g\in G$, $x\in X$, we call that $\rho_1$ and $\rho_2$ are equivalent. If the $X$ is a smooth manifold and the $h$ is a self-diffeomorphism, we call that $\rho_1$ and $\rho_2$ are smoothly equivalent.}   
\end{definition}

\begin{theorem}\label{classifyS1action}
	{\it  Assume the $S^1_{\bold{a,b}}$, $S^1_{\bold{\bar a,\bar b}}$-actions on $S^7\times S^7$ are free. 
	 \begin{enumerate}
	 	\item If $\Sigma_i (a_i^2+b_i^2)=\Sigma_i (\bar a_i^2+\bar b_i^2)$, the $S^1_{\bold{a,b}}$-action is equivalent to the $S^1_{\bold{\bar a,\bar b}}$ or $S^1_{\bold{-\bar a,-\bar b}}$-action.
	 	\item If $\Sigma_i (a_i^2+b_i^2)=\Sigma_i (\bar a_i^2+\bar b_i^2)$ is coprime to $6$ or congruent to $4$ modulo $24$, then the $S^1_{\bold{a,b}}$-action is smoothly equivalent to the $S^1_{\bold{\bar a,\bar b}}$ or $S^1_{\bold{-\bar a,-\bar b}}$-action.
	 \end{enumerate}}	 
\end{theorem}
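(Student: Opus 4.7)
The plan is to show each free quotient $M_{\bold{a,b}} := (S^7 \times S^7)/S^1_{\bold{a,b}}$ realizes the cohomology ring of $\mathbb{CP}^3 \times S^7$ and has $p_1(M_{\bold{a,b}}) = \sum_i(a_i^2+b_i^2)$, apply Theorem~\ref{1.1} to the pair $(M_{\bold{a,b}}, M_{\bar{\bold a},\bar{\bold b}})$, and then lift the resulting homeomorphism (or diffeomorphism) to an $S^1$-equivariant map via the classification of principal $S^1$-bundles by the Euler class.

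Simple-connectedness of $M_{\bold{a,b}}$ follows from the homotopy sequence of $S^1 \to S^7 \times S^7 \to M_{\bold{a,b}}$. For the cohomology I would run the Leray--Serre spectral sequence of the Borel fibration $S^7 \times S^7 \to M_{\bold{a,b}} \simeq (S^7\times S^7)_{S^1} \to BS^1$: writing $\alpha,\beta$ for the top classes of the two sphere factors and $u\in H^2(BS^1)$ for the generator, the only possibly nontrivial differentials are the transgressions $d_8\alpha=(a_1a_2a_3a_4)u^4$ and $d_8\beta=(b_1b_2b_3b_4)u^4$; freeness via Proposition~\ref{freeS1action} forces $\gcd(\prod_i a_i,\prod_j b_j)=1$, which collapses the sequence to give $H^*(M_{\bold{a,b}})\cong H^*(\mathbb{CP}^3\times S^7)$ as graded rings. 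For the Pontrjagin class, let $L$ be the complex line bundle associated to the principal bundle $S^7\times S^7\to M_{\bold{a,b}}$ and $L_k:=L^{\otimes k}$. The $S^1$-equivariant stable trivializations $TS^7\oplus\underline{\mathbb{R}}\cong_{S^1}\underline{\mathbb{C}^4}_{\bold a}$ (and analogously with $\bold b$) on the two $S^7$-factors, combined with the triviality of the orbit direction, descend upon quotienting by the free $S^1$-action to
\[
TM_{\bold{a,b}}\oplus\underline{\mathbb{R}}^3\cong L_{a_1}\oplus\cdots\oplus L_{a_4}\oplus L_{b_1}\oplus\cdots\oplus L_{b_4}
\]
on $M_{\bold{a,b}}$. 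Since $c_1(L_k)=k\,c_1(L)$ and $c_1(L)$ generates $H^2(M_{\bold{a,b}})$, reading off $p_1$ yields $p_1(M_{\bold{a,b}})=\sum_i(a_i^2+b_i^2)$ in the integer convention of the paper.

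The hypothesis $\sum_i(a_i^2+b_i^2)=\sum_i(\bar a_i^2+\bar b_i^2)$ then gives $p_1(M_{\bold{a,b}})=p_1(M_{\bar{\bold a},\bar{\bold b}})$, and Theorem~\ref{1.1}(1) provides a homeomorphism $h$ for part (1), while Theorem~\ref{1.1}(3) or (5) upgrades $h$ to a diffeomorphism under the hypotheses of part (2). To convert $h$ into an equivalence of actions, I use that (topological or smooth) principal $S^1$-bundles are classified by their Euler class in $H^2(-;\mathbb{Z})\cong\mathbb{Z}$. The map $h^*$ sends the Euler class of $S^7\times S^7\to M_{\bar{\bold a},\bar{\bold b}}$ to $\pm$ that of $S^7\times S^7\to M_{\bold{a,b}}$. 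In the $+$ case a bundle isomorphism gives an $S^1$-equivariant lift $\tilde h$ covering $h$, realizing equivalence with $S^1_{\bar{\bold a},\bar{\bold b}}$. In the $-$ case, noting that $S^1_{-\bar{\bold a},-\bar{\bold b}}$ has the same orbits as $S^1_{\bar{\bold a},\bar{\bold b}}$ but Euler class of opposite sign, the same argument yields equivalence with $S^1_{-\bar{\bold a},-\bar{\bold b}}$.

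The main obstacle is the combined Pontrjagin class computation and Euler-class sign analysis: producing the equivariant tangent-bundle splitting cleanly enough to read off $p_1=\sum_i(a_i^2+b_i^2)$, and then correctly pairing the two signs of $h^*$ applied to the Euler class with the two options $S^1_{\bar{\bold a},\bar{\bold b}}$ versus $S^1_{-\bar{\bold a},-\bar{\bold b}}$, which is where the theorem's disjunction originates.
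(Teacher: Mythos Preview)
Your proposal is correct and follows the same overall architecture as the paper: verify that $M_{\bold{a,b}}$ has the cohomology ring of $\mb{CP}^3\times S^7$, compute $p_1(M_{\bold{a,b}})=\sum_i(a_i^2+b_i^2)$, invoke Theorem~\ref{1.1} to obtain a homeomorphism (resp.\ diffeomorphism) of quotients, and then lift it to an $S^1$-equivariant map using the classification of principal $S^1$-bundles by $H^2$, with the sign ambiguity in $h^\ast$ on $H^2$ accounting for the $(\bold{\bar a},\bold{\bar b})$ versus $(-\bold{\bar a},-\bold{\bar b})$ dichotomy.

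The one genuine methodological difference is in the $p_1$ computation. The paper builds the auxiliary $20$-manifold $X_{\bold{a,b}}=S^7\times_{S^1}(S^7\times S^7)$ fibering over $\mb{CP}^3$, computes $p_1(X_{\bold{a,b}})$ via Borel's theorem on $H^\ast(\mm{BU}(8))$ (Lemma~\ref{firstPonclass-X}), and then reads off $p_1(M_{\bold{a,b}})$ from the relation $\tau(X_{\bold{a,b}})\cong\epsilon^1\oplus\mathfrak p^\ast\tau(\mb{CP}^3\times M_{\bold{a,b}})$ (Lemmas~\ref{tangentXtoCPandM}, \ref{firstPonclass-M}). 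Your route---the $S^1$-equivariant stable splitting $TM_{\bold{a,b}}\oplus\underline{\mb{R}}^3\cong\bigoplus_i L_{a_i}\oplus\bigoplus_j L_{b_j}$ followed by $p_1=c_1^2-2c_2$---is more direct and bypasses the auxiliary space entirely; the paper's approach, on the other hand, also yields $w_2$ and $w_4$ of $X_{\bold{a,b}}$ along the way. For the cohomology ring the paper uses the Gysin sequence of $S^1\to S^7\times S^7\to M_{\bold{a,b}}$, whereas you run the Borel spectral sequence with the explicit transgressions $d_8\alpha=(\prod a_i)u^4$, $d_8\beta=(\prod b_j)u^4$; these are equivalent computations (note that $d_8$ is nontrivial, so the sequence does not literally collapse, but the $\gcd$ condition from Proposition~\ref{freeS1action} forces the correct $E_9=E_\infty$ page). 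The Euler-class lifting and the treatment of the sign are handled the same way in both arguments.
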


 We first make some basic constructions in Section \ref{Preli}. In Section \ref{normal5}, we develop a suitable bordism theory within the smooth category for the manifolds under consideration. Subsequently, in Section \ref{FiltrationforM}, we analysis a certain bordism group.
 Thereafter, in Section \ref{PLcateSec}, we carry out the computation of a certain bordism group in PL category. The proofs of Theorem \ref{1.1} are furnished in Section \ref{proofmain}. Finally, in Section \ref{constructforXab}, we explore the diagonal $S^1$-actions, thus prove Proposition \ref{freeS1action} and Theorem \ref{classifyS1action}.

\section{Preliminary}\label{Preli}

Let $M$ be a simply connected, closed, smooth $13$-manifold with the same cohomology ring as $ H^\ast(\mb{CP}^3\times S^7)$.

By \cite[Theorem 4.57]{Hatcher}, cohomology classes in $H^2({M})$ bijectively correspond to homotopy classes $[{M},\mm{K}(\mb{Z},2)]$ where $\mm{K}(\mb{Z},2)$ denotes the Eilenberg-MacLane space. $\mm{K}(\mb{Z},2)$ is homotopy equivalent to $\mb{CP}^\infty$ that is the colimit over the sequence $\mb{CP}^1\hookrightarrow \cdots \hookrightarrow \mb{CP}^n\hookrightarrow \mb{CP}^{n+1}\hookrightarrow \cdots$ where $\mb{CP}^n\hookrightarrow \mb{CP}^{n+1}$ is the natural inclusion. Hence there exists a bijection $H^2({M})\to [M, \mb{CP}^\infty]$.

Consider the map $f:{M}\to \mb{CP}^\infty$ corresponding to a generator of $H^2({M})$. Pulling back the universal principal $S^1$-bundle $\gamma$ over $\mb{CP}^\infty$ yields the following $S^1$-bundle over ${M}$
\begin{equation}
	f^\ast\gamma 
	 :S^1\hookrightarrow E\to {M} \label{S1bunM}
\end{equation}
  
\begin{lemma}
	\label{Globalcha}
	The manifold $E$ is homeomorphic to $S^7\times S^7$.
\end{lemma}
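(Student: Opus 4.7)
The plan is to compute $H^*(E)$ and the low homotopy groups directly from the bundle data, and then identify $E$ via a classification result for highly-connected $14$-manifolds.

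First, I apply the Gysin sequence for the principal $S^1$-bundle $S^1 \to E \to M$ whose Euler class is the generator $x \in H^2(M)$. Since $H^*(M) \cong \mb{Z}[x]/(x^4) \otimes \Lambda(y)$, the map $\cup x : H^i(M) \to H^{i+2}(M)$ is an isomorphism whenever both sides are nonzero. Running through
\[
\cdots \to H^{i-2}(M) \xrightarrow{\cup x} H^i(M) \to H^i(E) \to H^{i-1}(M) \xrightarrow{\cup x} H^{i+1}(M) \to \cdots
\]
yields $H^0(E) = H^{14}(E) = \mb{Z}$, $H^7(E) = \mb{Z}^2$, and zero in every other degree, so $H^*(E) \cong H^*(S^7 \times S^7)$ additively; the ring structure then follows from Poincar\'e duality, since odd-degree classes square to zero in torsion-free groups and the two degree-$7$ generators must pair to the top class.

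Next, from the long exact homotopy sequence of the bundle, the connecting map $\pi_2(M) \to \pi_1(S^1) = \mb{Z}$ is evaluation against the Euler class $x$; since $x$ generates $H^2(M)$ and $\pi_2(M) \cong H_2(M) = \mb{Z}$ by Hurewicz, this connecting map is an isomorphism, so $\pi_1(E) = \pi_2(E) = 0$. Iterating Hurewicz against the vanishing $H_i(E) = 0$ for $3 \le i \le 6$ then upgrades $E$ to a $6$-connected closed $14$-manifold with $\pi_7(E) \cong \mb{Z}^2$.

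Finally I invoke the classification of $(n-1)$-connected closed $2n$-manifolds (Wall) for $n=7$. The classifying data are the intersection form on $H_7(E)$ and a tangent-bundle function $H_7(E) \to \pi_6(\mm{SO}(7))$. By Poincar\'e duality the intersection form on $H_7(E)$ is the standard skew form, matching that of $S^7 \times S^7$. The main obstacle is checking that the bundle function also vanishes for $E$: I plan to represent the two generators of $\pi_7(E)$ by embedded $7$-spheres with trivial normal bundles, using the $S^1$-bundle structure to lift suitable spherical representatives from $M$. Matching invariants then yields a diffeomorphism (hence a homeomorphism) $E \cong S^7 \times S^7$; as an alternative purely at the level of homeomorphism, one may first construct a homotopy equivalence $S^7 \times S^7 \to E$ by extending a wedge map $S^7 \vee S^7 \to E$ across the top cell after verifying that the relevant Whitehead product vanishes in $\pi_{13}(E)$, and then upgrade the homotopy equivalence to a homeomorphism by topological surgery in dimension $14$.
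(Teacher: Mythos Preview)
Your approach is close to the paper's: both compute $H^*(E)$ via the Gysin sequence and then appeal to Wall's classification of highly connected $14$-manifolds. The paper's execution is slightly different and cleaner: it removes an open $14$-disk and applies Wall's handlebody result \cite[Lemma~5, Case~7]{Wall1962} to $E\setminus\mathring D^{14}$, obtaining a genuine diffeomorphism $E\setminus\mathring D^{14}\cong (S^7\times S^7)\setminus\mathring D^{14}$; the boundary self-diffeomorphism of $S^{13}$ is then coned off by radial extension, and that final step is where the conclusion drops from diffeomorphism to homeomorphism.

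Two points in your write-up need adjustment. First, the ``main obstacle'' you flag is empty: $\pi_6(\mm{SO}(7))=0$ (the fibration $\mm{SO}(7)\to\mm{SO}(8)\to S^7$ admits a section because $S^7$ is parallelizable, and $\pi_6(\mm{SO})=0$), so Wall's normal-bundle invariant $\alpha$ vanishes automatically and no ad~hoc construction of embedded spheres is required. Second, for \emph{closed} manifolds Wall's invariants determine the diffeomorphism type only up to connected sum with a homotopy $14$-sphere; matching invariants therefore yields $E\cong (S^7\times S^7)\#\Sigma^{14}$ for some $\Sigma^{14}\in\Theta_{14}$, which still gives the desired homeomorphism but not the diffeomorphism you assert. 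The paper's punctured-manifold route isolates exactly this exotic-sphere ambiguity in the disk-gluing step. Your proposed alternative via Whitehead products and topological surgery is substantially heavier and unnecessary here.
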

\begin{proof}
 The bundle \eqref{S1bunM} implies that $E$ is a smooth simply connected $14$-manifold.
By the Gysin sequence \cite[pp.438]{Hatcher}, we have 
	  $$H^\ast(E)\cong H^\ast(S^7\times S^7)$$
	  	  
	  Let $E \backslash \mathring D^{14}$  denote the manifold obtained by removing the interior of a $14$-dimensional closed disk from the manifold $E$. Obviously, its boundary $\partial( E \backslash \mathring D^{14})$ is the standard sphere $S^{13}$. Applying the homotopy and homology exact sequences for the pair $(E,E \backslash \mathring D^{14})$, we have 
	  \begin{enumerate}
	  	\item $E \backslash \mathring D^{14}$ is a smooth $6$-connected $14$-manifold. 
	  	\item $H_7(E \backslash \mathring D^{14})$ is isomorphic to $\mb{Z}\oplus \mb{Z}$.
	  \end{enumerate}
	  	 
	   By Wall's classification  \cite[Lemma 5, Case 7]{Wall1962},  
	   $E \backslash \mathring D^{14}$ is diffeomorphic to $(S^7\times S^7)\backslash\mathring D^{14}$. Moreover, the boundary diffeomorphism 	 $$\partial(E\backslash \mathring D^{14})=S^{13}\to \partial((S^7\times S^7)\backslash \mathring D^{14})=S^{13}$$
	 extends to a homeomorphism $D^{14}\to D^{14}$ by radial extension. 
	Consequently, $E$ is homeomorphic to $S^7\times S^7$.
	\end{proof}	

  Now we construct a fibre bundle:
\begin{equation}
	E\hookrightarrow S^\infty\times_{S^1}E\to \mb{CP}^\infty \label{fibrebun-S7S7overCPin}
\end{equation}
where the quotient space $S^\infty\times_{S^1}E$ is induced by the $S^1$-action on $S^\infty\times E$: the right $S^1$-action on $S^\infty$ is the Hopf action, the left $S^1$-action on $E$ is induced by the principle $S^1$-bundle \eqref{S1bunM}. A straightforward verification shows that $S^\infty\times_{S^1}E$ is homotopy equivalent to $M$.

By the homeomorphism $E\cong S^7\times S^7$, we have the following bundle isomorphism
\[
\xymatrix@C=1.8cm{
E\ar[d]^-{}\ar[r]^-{\cong} &S^7\times S^7\ar[d]^-{}\\
S^\infty\times_{S^1}E\ar[d] \ar[r]^-{\cong}& S^\infty\times_{S^1} (S^7\times S^7)\ar[d]\\
\mb{CP}^\infty\ar@{=}[r]& \mb{CP}^\infty}
\] 
where the $S^1$-action on $S^7\times S^7$ is induced by the $S^1$-action on $E$. 

\begin{definition}\label{aigenera}
	Let $a_i$ ($i=1,2$) be a basis of $H^7(S^7\times S^7)$ satisfying 
	$a_i\in \mm{p}_i^\ast(H^7(S^7))$ where $\mm{p}_i$ is the projection $S^7\times S^7\to S^7$ for the $i$-th factor.
\end{definition}

For the fibre bundle 
$S^7\times S^7\hookrightarrow S^\infty\times_{S^1}(S^7\times S^7)\to \mb{CP}^\infty$, we have the Serre spectral sequence
\begin{equation}
	E_2^{p,q}=H^p(\mb{CP}^\infty;H^q(S^7\times S^7))\Longrightarrow H^{p+q}(S^\infty\times_{S^1}(S^7\times S^7)) \label{SSSforS7timesS7}
\end{equation}

Recall that $H^\ast(\mb{CP}^\infty)\cong \mb{Z}[x]$.
\begin{lemma}\label{diffforS7timesS7}
In the spectral sequence \eqref{SSSforS7timesS7},
\begin{enumerate}
	\item $E_8^{0,7}=E_2^{0,7}$, $E_8^{8,0}=E_2^{8,0}$, and $E_\infty^{8,0}=E_9^{8,0}$.
	\item Let $d_8(a_i)=s_ix^8$ where $s_i\in \mb{Z}$, $i=1,2$. If $s_i=0$, then $s_{3-i}=\pm 1$. If $s_1\ne 0$ and $s_2\ne 0$,  then $s_1$ is coprime to $s_2$.
\end{enumerate}
\end{lemma}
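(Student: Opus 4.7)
The plan is to exploit the sparsity of $H^\ast(S^7\times S^7)$, which lives only in degrees $0, 7, 14$, and to use the known cohomology ring of $M\simeq S^\infty\times_{S^1}(S^7\times S^7)$ as the constraint that pins down $d_8$. Since $\mb{CP}^\infty$ is simply connected and all groups involved are torsion-free, the $E_2$-page simplifies to $H^p(\mb{CP}^\infty)\otimes H^q(S^7\times S^7)$, nonzero only for even $p$ and $q\in\{0,7,14\}$.

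For part (1), I would verify that every differential $d_r$ with $2\le r\le 7$ touching either position $(0,7)$ or $(8,0)$ vanishes for dimensional reasons. The differential $d_r:E_r^{0,7}\to E_r^{r,8-r}$ has target with fiber degree $8-r\in\{1,\ldots,6\}$, which is zero in $H^\ast(S^7\times S^7)$. Likewise, an incoming differential $d_r:E_r^{8-r,r-1}\to E_r^{8,0}$ has source of fiber degree $r-1\in\{1,\ldots,6\}$, again zero, while outgoing differentials from $(8,0)$ land in negative fiber degree. This yields $E_8^{0,7}=E_2^{0,7}$ and $E_8^{8,0}=E_2^{8,0}$. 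After $d_8$, any incoming differential to $(8,0)$ originates at negative base degree, so $E_\infty^{8,0}=E_9^{8,0}$.

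For part (2), the crucial input is $H^8(M)=0$: since $M$ has the cohomology ring of $\mb{CP}^3\times S^7$, and the degree-$2$ generator $x$ satisfies $x^4=0$, degree $8$ is trivial. On the anti-diagonal $p+q=8$ of the $E_\infty$-page the only potentially nonzero slot is $(8,0)$, because the other choices $(0,8),(2,6),(4,4),(6,2)$ all have $q\notin\{0,7,14\}$. Therefore $E_\infty^{8,0}=E_9^{8,0}=0$. Combining with part (1), the map $d_8:E_2^{0,7}\to E_2^{8,0}$ must be surjective. Writing $d_8(a_i)=s_ix^8$, surjectivity means $s_1$ and $s_2$ generate $\mb{Z}$, i.e.\ $\mm{gcd}(s_1,s_2)=1$ under the convention $\mm{gcd}(s,0)=|s|$. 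Unpacking gives both claims: if one of the $s_i$ vanishes the other is $\pm 1$; otherwise the two are coprime.

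The argument is essentially forced once one recognizes $H^8(M)=0$; the only care required is in using the sparsity of $H^\ast(S^7\times S^7)$ to rule out intervening differentials and extension problems on the $p+q=8$ diagonal, which is mechanical but needs to be written out with the precise positions as above.
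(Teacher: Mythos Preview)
Your proposal is correct and follows essentially the same approach as the paper: use the sparsity of $H^\ast(S^7\times S^7)$ to see that the relevant entries survive to $E_8$ and that $E_\infty^{8,0}=E_9^{8,0}$, then invoke $H^8(M)=0$ to force $d_8$ to be surjective, which is equivalent to $\gcd(s_1,s_2)=1$. The paper's proof is simply a terser version of yours (it declares part~(1) obvious and writes the surjectivity conclusion as the existence of $r_1,r_2$ with $r_1s_1+r_2s_2=1$).
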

\begin{proof}
	The item (1) is obvious. We only prove item (2).
	
	Since $S^\infty\times_{S^1}(S^7\times S^7)$ is homeomorphic to $S^\infty\times_{S^1}E$, $S^\infty\times_{S^1}(S^7\times S^7)$ is homotopy equivalent to $M$. Thus $H^8(S^\infty\times_{S^1}(S^7\times S^7))=0$. This implies $E_\infty^{8,0}=0$. So there exist $r_i\in \mb{Z}$ ($i=1,2$) such that 
	$$d_8(r_1a_1+r_2a_2)=x^4$$
	In other words, $r_1s_1+r_2s_2=1$. This finishes the proof.  
\end{proof}

\begin{proposition}\label{Xconstruct}
	Let $M$ be a simply connected, closed, smooth $13$-manifold with the same cohomology ring as $ H^\ast(\mb{CP}^3\times S^7)$.  For any prime $p$, there exists a space $X$ such that
	\begin{enumerate}
		\item $H^\ast(X)\cong \mb{Z}[x]/kx^4$ where $x\in H^2(X)$, $0\ne k$ is coprime to $p$.
		\item There exists a map $g:M\to X$ inducing isomorphisms on $H^i$ for $0\le i\le 6$. 
		\end{enumerate}
\end{proposition}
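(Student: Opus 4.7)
The plan is to build $X$ as an $S^7$-bundle over $\mb{CP}^\infty$ with Euler class $kx^4$ for a suitable $k$ coprime to $p$, and to produce the map $g$ by lifting the classifying map $f:M\to\mb{CP}^\infty$ of $x\in H^2(M)$ through the bundle projection. First I choose $k$: by Lemma~\ref{diffforS7timesS7}(2), $\gcd(s_1,s_2)=1$, so at most one of the two integers is divisible by $p$. After interchanging $a_1,a_2$ if necessary, I may assume $p\nmid s_1$ and set $k:=s_1$.

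To construct $X$, let $\gamma\to\mb{CP}^\infty$ denote the tautological complex line bundle, so $c_1(\gamma)=x$, and form the rank-$4$ complex vector bundle $V:=\gamma^{\oplus 3}\oplus\gamma^{\otimes k}$. By multiplicativity of Chern classes, $c_4(V)=x^3\cdot kx=kx^4$. Let $X:=S(V)$ be the unit-sphere bundle, an $S^7$-fibration $\pi:X\to\mb{CP}^\infty$. The Gysin sequence
\[
\cdots\to H^{n-8}(\mb{CP}^\infty)\xrightarrow{\,\cup\,kx^4\,}H^n(\mb{CP}^\infty)\xrightarrow{\pi^*}H^n(X)\to H^{n-7}(\mb{CP}^\infty)\to\cdots
\]
combined with the vanishing of odd-degree cohomology on $\mb{CP}^\infty$ and injectivity of multiplication by $kx^4$ on $\mb{Z}[x]$ yields $H^*(X)\cong\mb{Z}[x]/(kx^4)$, establishing~(1).

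For (2), a map $g:M\to X$ with $g^*x=x$ is equivalent to a lift of $f$ through $\pi$, i.e., a nowhere-vanishing section of $f^*V=L^{\oplus 3}\oplus L^{\otimes k}$ over $M$, where $L=f^*\gamma$ is the Hermitian line bundle classified by $x$. The primary obstruction is $e(f^*V)=kx^4\in H^8(M)=0$, which vanishes automatically. To handle the higher obstructions in $H^j(M;\pi_{j-1}(S^7))$ for $j\ge 9$, I use that $M\simeq S^\infty\times_{S^1}E$ by the construction in~\eqref{fibrebun-S7S7overCPin} and that $X\simeq ES^1\times_{S^1}S^7$ for the weight-$(1,1,1,k)$ action on $S^7\subset\mb{C}^4$. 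The identity $d_8(a_1)=kx^4$ from Lemma~\ref{diffforS7timesS7} supplies precisely the cohomological compatibility needed to promote the fibrewise projection $E\cong S^7\times S^7\to S^7$ (which represents $a_1$) to an $S^1$-equivariant map, and taking the Borel quotient of this equivariant map gives $g$. Since $g^*x=x$, and since in degrees $\le 6$ both $H^*(M)$ and $H^*(X)$ are freely generated over $\mb{Z}$ by $1,x,x^2,x^3$, the map $g^*$ is an isomorphism on $H^i$ for $i\le 6$.

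The main obstacle is the construction of $g$ in the last step. Although the primary obstruction vanishes for free because $H^8(M)=0$, producing the $S^1$-equivariant lift of the fibrewise projection requires carefully aligning the free $S^1$-action on $E$ coming from~\eqref{S1bunM} with the weight-$(1,1,1,k)$ $S^1$-action on $S^7$; the cohomological compatibility $d_8(a_1)=kx^4$ is what makes this alignment possible.
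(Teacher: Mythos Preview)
Your construction of $X=S(V)$ and the Gysin verification of~(1) are correct. The gap is in the construction of $g$. You correctly reduce the existence of $g$ to lifting $f$ through $\pi$, i.e.\ to sectioning $S(f^*V)\to M$, and observe that the primary obstruction vanishes since $H^8(M)=0$. But the higher obstructions live in nonzero groups (for instance $H^9(M;\pi_8S^7)\cong\mb Z_2$ and $H^{11}(M;\pi_{10}S^7)\cong\mb Z_{24}$), and your method of killing them is circular. You propose to produce an $S^1$-equivariant map $E\to S^7$ for the weight-$(1,1,1,k)$ action; but since the $S^1$-action on $E$ is free, such an equivariant map, after passing to Borel quotients, \emph{is} a map $M\simeq ES^1\times_{S^1}E\to ES^1\times_{S^1}S^7=X$ over $\mb{CP}^\infty$ --- exactly the lift you are trying to construct. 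The transgression identity $d_8(a_1)=s_1x^4$ is a necessary cohomological shadow of such a lift, not a recipe for building one, and nothing forces the $S^1$-action on $E$ (transported through the merely topological homeomorphism of Lemma~\ref{Globalcha}) to align via $\mm p_1$ with your prescribed linear action on the target~$S^7$.

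The paper avoids this by reversing the order of construction. It does not fix $X$ in advance; instead it sets $X:=S^\infty\times_{S^1}S^7$ with whatever $S^1$-action on $S^7$ is induced from the action on $S^7\times S^7$ via the factor projection $\mm p_i$, so that $\mm p_i$ is equivariant and $g=\bar{\mm p}_i$ exists tautologically. All the effort then goes into part~(1): naturality of the Serre spectral sequence transfers $d_8(a_i)=s_ix^4$ to $\tilde d_8(b_i)=s_ix^4$ for the fibration $S^7\hookrightarrow X\to\mb{CP}^\infty$, whence $H^*(X)\cong\mb Z[x]/(s_ix^4)$, and one selects $i\in\{1,2\}$ with $p\nmid s_i$ using Lemma~\ref{diffforS7timesS7}(2). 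Letting $X$ be dictated by $M$ makes $g$ automatic; prescribing $X$ externally, as you do, leaves you with an unresolved lifting problem.
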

\begin{proof}
For $i=1$ or $2$,
the projection $\mm{p}_i:S^7\times S^7\to S^7$ induces the following bundle morphism
	\[
\xymatrix@C=1.8cm{
S^7\times S^7\ar[d]^-{}\ar[r]^-{\mm{p}_i} & S^7\ar[d]^-{}\\
S^\infty\times_{S^1}(S^7\times S^7)\ar[d] \ar[r]^-{\bar{\mm{p}}_i}& X=S^\infty\times_{S^1} S^7\ar[d]\\
\mb{CP}^\infty\ar@{=}[r]& \mb{CP}^\infty}
\]
where the quotient space $S^\infty\times_{S^1}S^7$ is induced by the $S^1$-action on $S^\infty\times S^7$: the right $S^1$-action on $S^\infty$ is the Hopf action, the left $S^1$-action on $S^7$ is induced by the projection $\mm{p}_i$.

The above bundle morphism induces a morphism between the Serre spectral sequences
\[
\xymatrix@C=.2cm{
  \tilde{E}_2^{p,q}={H}^p(\mb{CP}^\infty;H^q(S^7))\ar@{=>}[rr]^-{\tilde d_r}\ar[d]&  & H^{p+q}(X)\ar[d]^-{} \\
{E}_2^{p,q}={H}^p(\mb{CP}^\infty;H^q(S^7\times S^7))\ar@{=>}[rr]^-{d_r}& & H^{p+q} ( S^\infty\times_{S^1}(S^7\times S^7))
}
\] 
Let $b_i\in H^7(S^7)$ satisfy $\mm{p}_i^\ast(b_i)=a_i\in H^7(S^7\times S^7)$. Suppose $d_8(a_i)=s_ix^8$, then $\tilde d_8(b_i)=s_ix^8$ by the naturality of the Serre spectral sequence. 
Thus, by a straightforward computation, we have 
 $H^\ast(X)\cong \mb{Z}[x]/s_ix^4$ where $\mm{deg}(x)=2$. 

By Lemma \ref{diffforS7timesS7} (2), we consider the following three cases: 
\begin{itemize}
	\item If $s_1=0$, then $s_2=1$. We take $i=2$ in the above process. Thus $H^\ast(X)\cong \mb{Z}[x]/x^4$.
	\item If $s_2=0$, then $s_1=1$. We take $i=1$ in the above process. Thus $H^\ast(X)\cong \mb{Z}[x]/x^4$.
	\item If $s_1\ne 0$ and $s_2\ne 0$, then $s_1$ is coprime to $s_2$. Hence, for any prime number $p$, either $s_1$ or $s_2$ is coprime to $p$. Suppose $s_1$ is coprime to $p$, then we take $i=1$ in the above process. Thus $H^\ast(X)\cong \mb{Z}[x]/s_1x^4$.
\end{itemize}
This finishes the proof of (1).  

Let $g$ be the composition $M\simeq S^\infty\times_{S^1}(S^7\times S^7)\stackrel{\bar{\mm{p}}_i}{\to }X$. It is easy to check that $g$ induces isomorphisms on $H^i$ for $0\le i\le 6$.
\end{proof}

\section{The normal $B$-structure}\label{normal5}
Let $\pi: B \to \mathrm{BO}$ be a fibration over the classifying space $\mathrm{BO}$ of stable vector bundles. 
\begin{definition}\label{DefnorB}
	(1) A stable vector bundle admits a $B$-structure if its classifying map has a lift to $B$.
	
	(2) A normal $B$-structure of a smooth manifold $M$ is a lift of the classifying map of its stable normal bundle to $B$.
\end{definition}


Let ${M}$ be a simply connected, closed, smooth 13-dimensional manifold with cohomology ring $H^*({M}) \cong H^*(\mathbb{CP}^3 \times S^7)=\mb{Z}[x,y]/(x^4,y^2)$ where $\mm{
deg}(x)=2$, $\mm{deg}(y)=7$. 
The $x$ corresponds to a map $f:M\to \mb{CP}^\infty$ that induces isomorphisms on $H^i$ for $0\le i\le 6$.

 Suppose the first Pontryagin class $p_1({M}) = nx^2 \in H^4({M})$,  
     the second Stiefel-Whitney class $w_2({M}) \equiv nx \ (\mathrm{mod}\ 2) \in H^2({M};\mathbb{Z}/2)$
where $n\in \mb{Z}$. Next we construct a vector bundle $\xi$ over $\mathbb{CP}^\infty$ whose first Pontryagin class $p_1(\xi)$ depends algebraically on the parameter $n$.

Let $\mathcal{H}$ denote the Hopf bundle over $\mathbb{CP}^\infty$. If $n\ge 0$, let $\xi$ be the complementary bundle of the Whitney sum $n\mathcal{H}$; if $n<0$, let $\xi$ be the Whitney sum $-n\mathcal{H}$. These constructions can be formally expressed as $\xi = -n\mathcal{H}$. The characteristic classes of $\xi$ are given by:
\begin{itemize}
    \item First Chern class: $c_1(\xi) = -nx \in H^2(\mathbb{CP}^\infty)$
    \item Second Chern class: $c_2(\xi) = \frac{n(n+1)}{2}x^2 \in H^4(\mathbb{CP}^\infty)$
    \item First Pontryagin class: $p_1(\xi) = -nx^2 \in H^4(\mathbb{CP}^\infty)$
\end{itemize}
where $x$ generates $H^2(\mathbb{CP}^\infty)$.

Let $\mathscr{N}_{{M}}$ denote both the stable normal Gauss map ${M}\to \mm{BO}$ and the stable normal bundle of ${M}$. By computing the
first Pontryagin class and the second Stiefel-Whitney class of the virtual bundle $\mathscr{N}_{{M}}-f^\ast \xi$, we have $w_2(\mathscr{N}_{{M}}-f^\ast \xi)=0$ and $p_1(\mathscr{N}_{{M}}-f^\ast \xi)=0$. 
Next, we show that the bundle $\mathscr{N}_{{M}}-f^\ast \xi$ admits a $\mm{BO}\langle 8\rangle$-structure. 

Let $\mathrm{BO}\langle n \rangle$ be the $(n - 1)$-connected cover of $\mathrm{BO}$. That is to say, $\mathrm{BO}\langle n \rangle$ is $(n - 1)$-connected, and there exists a fibration $\mathrm{BO}\langle n \rangle \to \mathrm{BO}$ which induces isomorphisms on $\pi_k$ for $k\geq n$. It is well-known that $\mathrm{BSO} = \mathrm{BO}\langle 2\rangle$, $\mathrm{BSpin} = \mathrm{BO}\langle 4\rangle$, and $\mathrm{BString} = \mathrm{BO}\langle 8\rangle$. 

Furthermore, there exists a sequence $\mathrm{BO}\langle 8\rangle \to \mathrm{BO}\langle 4\rangle \to \mathrm{BO}\langle 2\rangle \to \mathrm{BO}$, where each map in this sequence is a fibration. From the unstable variation of the Postnikov tower in \cite{BuNa} (see page 44), we can obtain the obstructions for lifting a map $X \to \mathrm{BO}$ to $\mathrm{BO}\langle 8\rangle$:
\begin{lemma}\label{BO8stru}
Let $\eta$ be a stable vector bundle over $X$. If $w_2(\eta) = w_1(\eta) = 0$, $H^4(X)$ is torsion-free, and $p_1(\eta) = 0$, then the map $\eta$ admits a $\mathrm{BO}\langle 8\rangle$-structure.
\end{lemma}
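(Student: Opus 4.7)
The plan is to walk up the tower
\[
\mathrm{BO}\langle 8\rangle \to \mathrm{BO}\langle 4\rangle=\mathrm{BSpin} \to \mathrm{BO}\langle 2\rangle=\mathrm{BSO} \to \mathrm{BO}
\]
stage by stage, verifying at each step that the obstruction to the next lift vanishes. Since each arrow is a principal fibration with an Eilenberg--MacLane fibre, standard obstruction theory (or equivalently, the unstable Postnikov perspective cited from \cite{BuNa}) reduces each lifting problem to the vanishing of a single cohomology class of $X$ obtained by pulling back the relevant $k$-invariant of the tower.

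First I would note that $w_1(\eta)=0$ lifts the classifying map $\eta:X\to \mathrm{BO}$ to $\mathrm{BSO}$. Next, since $\mathrm{BSpin}\to \mathrm{BSO}$ has fibre $K(\mathbb{Z}/2,1)$ and is classified by $w_2\in H^2(\mathrm{BSO};\mathbb{Z}/2)$, the hypothesis $w_2(\eta)=0$ provides a further lift to $\mathrm{BSpin}$. Both of these steps are routine, and the hypotheses of the lemma are used directly without interaction with the other assumptions.

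The main step is the lift from $\mathrm{BSpin}$ to $\mathrm{BString}=\mathrm{BO}\langle 8\rangle$. The fibration $\mathrm{BString}\to \mathrm{BSpin}$ has fibre $K(\mathbb{Z},3)$ and is classified by the integral class $\tfrac{1}{2}p_1\in H^4(\mathrm{BSpin};\mathbb{Z})$, that is, the unique class whose double equals $p_1$. The obstruction against the chosen Spin-lift of $\eta$ is therefore $\tfrac{1}{2}p_1(\eta)\in H^4(X;\mathbb{Z})$, an element satisfying $2\cdot \tfrac{1}{2}p_1(\eta)=p_1(\eta)=0$. Invoking the torsion-freeness of $H^4(X)$ at exactly this point forces $\tfrac{1}{2}p_1(\eta)=0$, and the desired lift to $\mathrm{BO}\langle 8\rangle$ exists.

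The hard part will be precisely this final step: without torsion-freeness, $p_1(\eta)=0$ would only guarantee that the obstruction has order at most $2$, not that it vanishes. A mild secondary point worth addressing in the write-up is that the conclusion should not depend on the particular Spin-structure chosen in step two; since the set of Spin-structures on $\eta$ is a torsor under $H^1(X;\mathbb{Z}/2)$, and this group has no bearing on the integral degree-$4$ obstruction class, the choice is irrelevant.
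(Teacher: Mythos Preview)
Your proposal is correct and is exactly the argument the paper has in mind: the paper does not spell out a proof but simply invokes the Postnikov tower $\mathrm{BO}\langle 8\rangle\to\mathrm{BSpin}\to\mathrm{BSO}\to\mathrm{BO}$ from \cite{BuNa}, whose stage-by-stage obstructions are $w_1$, $w_2$, and $\tfrac{1}{2}p_1$, precisely as you describe. One small sharpening for your final remark: the independence of the Spin structure is not because $H^1(X;\mathbb{Z}/2)$ has ``no bearing'' on degree~$4$ in general, but rather because any two Spin lifts pull back $\tfrac{1}{2}p_1$ to classes with the same double $p_1(\eta)$, so their difference is $2$-torsion and hence zero under your torsion-free hypothesis.
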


Recall $H^1(M)=0$ and $H^4(M)=\mb{Z}$. By Lemma \ref{BO8stru}, the virtual bundle $\mathscr{N}_{{M}}-f^\ast \xi$ admits a $\mathrm{BO}\langle 8\rangle$-structure. In other words, the classifying map of the virtual bundle $\mathscr{N}_{{M}}-f^\ast \xi$ has a lift to $\mm{BO}\langle 8\rangle$, namely $\nu: M\to \mm{BO}\langle 8\rangle$. Let $\gamma_8$ be the universal bundle over $\mm{BO}\langle 8 \rangle$, i.e. the pullback bundle of the universal bundle over $\mm{BO}$ through the fibration $\mm{BO}\langle 8 \rangle\to \mm{BO}$. Then we have $\nu^\ast\gamma_8\cong \mathscr{N}_{{M}}-f^\ast \xi$.

Let ${B}=\mm{BO}\langle 8 \rangle \times  \mb{CP}^\infty$. There is a product bundle, namely $\gamma_8\times \xi$, over $B$.
Its classifying map, denoted by
$\pi(\xi): {B}\to \mm{BO}$, can be factored as the composition $B\hookrightarrow E_{\pi} \to \mm{BO}$ of a homotopy equivalence and a fibration by \cite[Proposition 4.64, pp.407]{Hatcher}. Hence we regard the map $\pi(\xi)$ as a fibration.

Now we claim that the manifold $M$ admits a normal $B$-structure. Consider
the composition 
$$(\nu,f):{M}\stackrel{\Delta}{\longrightarrow }{M}\times {M}\stackrel{\nu\times f}{\longrightarrow}\mm{BO}\langle 8 \rangle \times \mb{CP}^\infty$$ 
 where $\Delta$ is the diagonal map. Recalling the Whitney sum of vector bundles, we have $$\mathscr N_{{M}}\cong \nu^\ast\gamma_8\oplus f^\ast\xi\cong (\nu,f)^\ast(\gamma_8\times \xi)$$ 
 Hence, the map $(\nu,f):{M}\to B$ is a lift of the classifying map of the stable normal bundle of $M$. 
 Thus, the claim follows. Indeed, $(\nu,f)$ induces isomorphisms on $H_i$ for $0\le i\le 6$. 

All $m$-dimensional smooth manifolds that possess normal 
$B$-structures, when considered under the cobordant relation and with the disjoint union serving as the operation, form a bordism group
 $\Omega_m^{\mm{O}\langle 8 \rangle}(\xi)$ \cite[pp.226]{Sw}. This bordism group is isomorphic to the homotopy group $\pi_n(\mm{MO}\langle 8 \rangle\wedge \mm{M}\xi)$ by the Pontryagin-Thom isomorphism, where $\mm{M}\xi$ and $\mm{MO}\langle 8\rangle$ are the Thom spectra of the bundles $\xi$ and $\gamma_8$, respectively.
 A bordism class of $\Omega_m^{\mm{O}\langle 8 \rangle}(\xi)$ is denoted by a pair $[\mc{M},\bar{\mathscr N}_\mc{M}]$ where $\mc{M}$ is a $m$-dimensional manifold, $\bar{\mathscr N}_\mc{M}$ is a normal $B$-structure of $\mc{M}$. 
It should be noted that the group $\Omega_m^{\mm{O}\langle 8 \rangle}(\xi)$ depends on the bundle $\xi$. 

 Let ${M}$, ${M}'$ be simply connected, closed, smooth 13-dimensional manifolds with the same cohomology ring as $H^*(\mathbb{CP}^3 \times S^7)$. When $p_1({M})=p_1({M}')$, we can choose the same bundle $\gamma_8\times \xi$ over $B$ such that the bordism classes $[{M},(\nu,f)]$ and $[{M}',(\nu',f')]$ lie in the same bordism group $\Omega_{13}^{\mm{O}\langle 8 \rangle}(\xi)$.
In the following sections, we will consider whether the bordism classes are equal.

\section{The filtrations of $\Omega_{13}^{\mm{O}\langle 8 \rangle}(\xi)$}\label{FiltrationforM}

Let $M$ be a simply connected, closed, smooth 13-dimensional manifolds with the same cohomology ring as $H^*(\mathbb{CP}^3 \times S^7)$. From Section \ref{normal5}, we obtain  bordism groups
 $\Omega_\ast^{\mm{O}\langle 8 \rangle}(\xi)$ associated with the manifold $M$. The pair $[M,(\nu,f)]$ represents a bordism class in $\Omega_{13}^{\mm{O}\langle 8 \rangle}(\xi)$.
 

There is an Atiyah-Hirzebruch spectral sequence (AHSS) as follows
 $$E_2^{p,q}\cong H_p(\mm{M}\xi
 ;\pi_q(\mm{MO}\langle 8\rangle))\Longrightarrow  \pi_{p+q}(\mm{MO}\langle 8\rangle\wedge \mm{M}\xi)\cong \Omega_{p+q}^{\mm{O}\langle 8 \rangle}(\xi).$$
 This spectral sequence admits a sequence of filtrations
 $$\Omega_m^{\mm{O}\langle 8 \rangle}(\xi)=F^{m,0}\supset F^{m-1,1}\supset\cdots \supset F^{0,m}$$
 satisfying $F^{i,m-i}/F^{i-1,m+1-i}\cong E_\infty^{i,m-i}$.
 
 In this section, we first prove the following proposition: 
 \begin{proposition}\label{filtraofM}
		 The bordism class $[M,(\nu,f)]$ lies in $F^{4,9}$.
	\end{proposition}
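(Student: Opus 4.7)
The plan is to perform a step-by-step descent in the Atiyah--Hirzebruch spectral sequence (AHSS) filtration, using the Thom-isomorphism identification of the cells of $\mathrm{M}\xi$ with those of $\mathbb{CP}^\infty$, together with the cohomological properties of $M$ and the low-degree String bordism groups.

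First, I would catalog the relevant entries of the $E_2$-page, namely $E_2^{p,13-p}=H_p(\mathrm{M}\xi;\pi_{13-p}\mathrm{MO}\langle 8\rangle)$. Since $\mathbb{CP}^\infty$ has cells only in even dimensions, the spectrum $\mathrm{M}\xi$ has stable cells only in even dimensions, so $H_p(\mathrm{M}\xi;A)=0$ for every odd $p$ and every coefficient group $A$. Combined with the known vanishings $\pi_k\mathrm{MO}\langle 8\rangle=0$ for $k=4,5,7$ in low-dimensional String bordism, the term $E_2^{p,13-p}$ vanishes for every $p\in\{5,6,7,8,9,11,13\}$. Among the range $p\ge 5$, only $E_2^{10,3}$ (a quotient of $\mathbb{Z}/24$) and $E_2^{12,1}$ (a quotient of $\mathbb{Z}/2$) remain potentially non-zero.

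Second, I would show that the image of $[M,(\nu,f)]$ in each of these two remaining $E_\infty^{p,13-p}$ terms vanishes. Via the Pontryagin--Thom description of the AHSS edge homomorphism, this image is, up to multiplication by a String-bordism characteristic number, the evaluation on $[M]$ of the pullback $f^\ast(x^k)$, where $2k$ is the dimension of the corresponding cell of $\mathbb{CP}^\infty$ (via the Thom isomorphism for $\xi$). For $p=10$ the relevant class is $f^\ast(x^5)\in H^{10}(M)=0$, and for $p=12$ it is $f^\ast(x^6)\in H^{12}(M)=0$; both vanish automatically since the cohomology ring of $M$ matches that of $\mathbb{CP}^3\times S^7$. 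Hence the images in $E_\infty^{10,3}$ and $E_\infty^{12,1}$ are zero.

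Third, descending the filtration $F^{13,0}\supset F^{12,1}\supset\cdots\supset F^{4,9}$ one step at a time, and invoking at each step either the vanishing of $E_2^{p,13-p}$ itself or the vanishing of the image of $[M,(\nu,f)]$ in $E_\infty^{p,13-p}$, I conclude that the class lies in $F^{4,9}$. The main technical obstacle is step two: the precise identification of the AHSS edge homomorphism with the cohomological pairing $\langle f^\ast(x^k),[M]\rangle$. This requires carefully tracking the Pontryagin--Thom construction through the Thom isomorphism applied to the restriction of $\xi$ to the even skeleta of $\mathbb{CP}^\infty$, and verifying that the String-bordism characteristic factors multiplying $f^\ast(x^k)$ do not interfere with the vanishing argument.
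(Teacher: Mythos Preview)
Your plan correctly identifies that $E_2^{p,13-p}=0$ for all $p\in\{5,6,7,8,9,11,13\}$, leaving only $E_2^{10,3}$ and $E_2^{12,1}$ as obstacles. However, step two contains a genuine gap. The quotient map $F^{12,1}\to E_\infty^{12,1}\subset E_2^{12,1}=H_{12}(\mb{CP}^\infty;\pi_1\mm{MO}\langle 8\rangle)\cong\mb{Z}/2$ is \emph{not} computed by the cohomological pairing $\langle f^\ast(x^6),[M]\rangle$. Geometrically, after homotoping $f$ into $\mb{CP}^6$, the image is the class in $\Omega_1^{\mm{String}}=\mb{Z}/2$ of the transverse preimage $P=f^{-1}(\mathrm{pt})$, equipped with the String structure inherited from $\nu$. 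The vanishing $f^\ast(x^6)\in H^{12}(M)=0$ only says that $P$ is null-homologous in $M$; it says nothing about whether the String (equivalently framed) bordism class of $P$ is trivial. The same problem occurs at $E_\infty^{10,3}$, where the obstruction lives in $\Omega_3^{\mm{String}}=\mb{Z}/24$. There is no ``String-bordism characteristic factor multiplying $f^\ast(x^k)$'' that rescues this: the invariant is simply not of that multiplicative form, and your acknowledged ``main technical obstacle'' cannot be overcome along the lines you sketch.

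The paper avoids this difficulty by a different mechanism. Rather than analyzing the images in $E_\infty^{10,3}$ and $E_\infty^{12,1}$ directly over $\mb{CP}^\infty$, it invokes Proposition~\ref{Xconstruct} (which in turn rests on Lemma~\ref{Globalcha}, the homeomorphism $E\cong S^7\times S^7$ for the circle bundle over $M$) to factor $f$ through an auxiliary space $X$ with $H^\ast(X)\cong\mb{Z}[x]/kx^4$ and $\gcd(k,2)=1$. Over $X$ the terms $\bar E_2^{p,13-p}$ for $5\le p\le 13$ have trivial $2$-primary part, so ${}_2\bar F^{13,0}\subset\bar F^{4,9}$ automatically (Lemma~\ref{newbord}); naturality of the AHSS under $h_\ast$ then pushes the $2$-primary component of $[M,(\nu,f)]$ into $F^{4,9}$. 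A second auxiliary space $X'$ with $\gcd(k',3)=1$ handles the $3$-primary component (Lemma~\ref{newbord2}). Thus the paper's argument genuinely uses the geometry of $M$ beyond its cohomology ring, and your purely cohomological descent cannot substitute for it.
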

 
 

	

Now, we construct new bordism groups associated with the $M$. 

Recall Proposition \ref{Xconstruct}. For $p=2$, and the $M$, there exists a space $X$ with a map $g:M\to X$ inducing isomorphisms on $H^i$ for $0\le i\le 6$. Moreover, $H^\ast(X)=\mb{Z}[x]/kx^4$ where $\mm{gcd}(k,2)=1$.
We can take a suitable map $h:X\to \mb{CP}^\infty$ such that $h\circ g\simeq f:M\to \mb{CP}^\infty$.

Let $\eta=h^\ast\xi$. Then we take the product bundle $\gamma_8\times \eta$ over $A=\mm{BO}\langle 8\rangle \times X$.  All $m$-dimensional manifolds that possess normal 
$A$-structures, when considered under the cobordant relation and with the disjoint union serving as the operation, form a bordism group
  $$\Omega_{m}^{\mm{O}\langle 8\rangle}(\eta)\cong \pi_{m}(\mm{MO}\langle 8\rangle \wedge \mm{M}\eta)$$
  where $\mm{M}\eta$ is the Thom spectra of $\eta$. 
  It is easy to check that the pair $[M,(\nu,g)]$ represents a bordism class in $\Omega_{13}^{\mm{O}\langle 8\rangle}(\eta)$.
  
  There is also an AHSS as follows
 $$\bar E_2^{p,q}\cong H_p(\mm{M}\eta
 ;\pi_q(\mm{MO}\langle 8\rangle))\Longrightarrow  \pi_{p+q}(\mm{MO}\langle 8\rangle\wedge \mm{M}\eta)\cong \Omega_{p+q}^{\mm{O}\langle 8 \rangle}(\eta).$$
 This spectral sequence also admits a sequence of filtrations
 $$\Omega_m^{\mm{O}\langle 8 \rangle}(\eta)=\bar F^{m,0}\supset \bar F^{m-1,1}\supset\cdots \supset \bar F^{0,m}$$
 satisfying $\bar F^{i,m-i}/\bar F^{i-1,m+1-i}\cong \bar E_\infty^{i,m-i}$.
 \begin{lemma}\label{newbord}
 	$\bar F^{13,0}\cong {_2}\bar F^{13,0}\oplus {_3}\bar F^{13,0}\oplus G$, and  ${_2}\bar F^{13,0}\subset \bar F^{4,9}$ where ${_2}\bar F^{13,0}$ is the $2$-primary part of $\bar F^{13,0}$, ${_3}\bar F^{13,0}$ is the $3$-primary part of $\bar F^{13,0}$, the order of any element in $G$ is coprime to $2$ and $3$.
 \end{lemma}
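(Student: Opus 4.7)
The plan is to analyze the Atiyah-Hirzebruch spectral sequence
$\bar E_2^{p,q} = H_p(\mm{M}\eta; \pi_q(\mm{MO}\langle 8\rangle)) \Longrightarrow \Omega_{p+q}^{\mm{O}\langle 8\rangle}(\eta)$
in total degree $13$, and to identify where $2$-primary contributions can appear. Since $\bar F^{13,0} = \Omega_{13}^{\mm{O}\langle 8\rangle}(\eta)$ is built as an iterated extension of the subquotients $\bar E_\infty^{p, 13-p}$, controlling these at each filtration level simultaneously controls the primary decomposition of $\bar F^{13,0}$ and the filtration depth of its $2$-primary part.

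The first input is the homology of $X$. From $H^\ast(X) \cong \mb{Z}[x]/(kx^4)$ with $\gcd(k,2) = 1$ (Proposition \ref{Xconstruct}), the universal coefficient theorem yields $H_p(X) \cong \mb{Z}$ for $p \in \{0,2,4,6\}$, $H_p(X) \cong \mb{Z}/k$ for odd $p \geq 7$, and $H_p(X) = 0$ otherwise; in particular, $H_\ast(X;\mb{Z})$ has no $2$-torsion. By the Thom isomorphism (using a virtual-rank-zero representative of $\eta$), $H_p(\mm{M}\eta; A) \cong H_p(X; A)$ for any abelian group $A$. The second input is the low-dimensional string bordism groups $\pi_q(\mm{MO}\langle 8\rangle)$ for $q \leq 13$, which by classical computations are $\pi_0 = \mb{Z}$, $\pi_1 = \pi_2 = \pi_6 = \mb{Z}/2$, $\pi_3 = \mb{Z}/24$, $\pi_4 = \pi_5 = \pi_7 = \pi_{11} = 0$, $\pi_8 = \mb{Z} \oplus \mb{Z}/2$, $\pi_9 = (\mb{Z}/2)^2$, $\pi_{10} = \mb{Z}/6$, $\pi_{12} = \mb{Z}$, $\pi_{13} = \mb{Z}/3$.

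I then tabulate $\bar E_2^{p,13-p}$ for each $p \in \{0,\ldots,13\}$ via the UCT identification $H_p(X; \pi_q) \cong (H_p(X) \otimes \pi_q) \oplus \mm{Tor}(H_{p-1}(X), \pi_q)$, and extract two key observations. First, no entry in total degree $13$ has a free $\mb{Z}$-summand: a free summand requires both $H_p(X)$ to have a free summand (forcing $p \in \{0,2,4,6\}$, equivalently $q \in \{13,11,9,7\}$) and $\pi_q$ to have a free summand (forcing $q \in \{0,8,12\}$), and no such $q$ appears on both lists. Second, the only $2$-primary contribution at $E_2$ sits at $(p,q) = (4,9)$, where $\bar E_2^{4,9} \cong \mb{Z} \otimes (\mb{Z}/2)^2 \cong (\mb{Z}/2)^2$. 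For every other $(p, 13-p)$, either $\pi_{13-p}$ has no $2$-primary part, or $H_p(X) = 0$, or $H_p(X) \cong \mb{Z}/k$ with $\gcd(k,2)=1$ so that the tensor product kills the $2$-primary part; the Tor terms are handled by the same parity observation applied to $H_{p-1}(X)$, noting that $\mm{Tor}(\mb{Z}/k, \mb{Z}/24) \cong \mb{Z}/\gcd(k,24)$ is purely $3$-primary when $\gcd(k,2)=1$.

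From the first observation every $\bar E_\infty^{p, 13-p}$ is a finitely generated torsion group, so $\bar F^{13,0}$ is finitely generated torsion, and hence decomposes as $\bar F^{13,0} \cong {_2}\bar F^{13,0} \oplus {_3}\bar F^{13,0} \oplus G$ where $G$ collects the $p$-primary summands for $p \geq 5$. From the second observation, since $\bar E_\infty^{p,13-p}$ is a subquotient of $\bar E_2^{p,13-p}$, it has no $2$-primary part for any $p > 4$; thus the iterated extension $\bar F^{13,0}/\bar F^{4,9}$ has no $2$-primary part, forcing ${_2}\bar F^{13,0} \subseteq \bar F^{4,9}$. The main obstacle is the case-by-case $E_2$ bookkeeping, especially handling the Tor terms, and citing the correct values of $\pi_q(\mm{MO}\langle 8\rangle)$ in the relevant range.
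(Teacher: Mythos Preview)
Your proof is correct and follows essentially the same approach as the paper's own proof: compute $H_p(X)$ from the cohomology ring $\mb{Z}[x]/(kx^4)$ via the universal coefficient theorem, invoke the Thom isomorphism $H_p(\mm{M}\eta)\cong H_p(X)$, list the string bordism groups $\pi_q(\mm{MO}\langle 8\rangle)$ for $q\le 13$, and then read off from the $E_2$-page that all entries in total degree $13$ are finite and that no $2$-primary contribution occurs for $p\ge 5$. Your write-up is simply more explicit about the UCT bookkeeping and the Tor terms than the paper, which compresses the same computation into the single sentence ``${_2}\bar E_2^{i,13-i}=0$ for $5\le i\le 13$''.
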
 
\begin{proof}
	In dimension $i \le 14$ \cite{Giambalvo,HoRa1995}, $\pi_i(\mm{MO} \langle  8 \rangle )$ is as follows.
\begin{center}
\begin{tabular}{|c|c|c|c|c|c|c|c|c|c|c|c|c|c|c|c|}
\hline
$i$ & 0&1& 2& 3&4& 5& 6&7&8&9\\
\hline
$\pi_i(\mm{MO} \langle  8\rangle )$ & $\mathbb{Z}$ & $\mathbb{Z}_2$ & $\mathbb{Z}_2$ & $\mathbb{Z}_{24}$& 0& 0& $\mathbb{Z}_2$ & 0& $ \mathbb{Z}\oplus \mathbb{Z}_2$ & $\mathbb{Z}_2\oplus \mathbb{Z}_2 $\\
\hline

\end{tabular}
\end{center}
\begin{center}
\begin{tabular}{|c|c|c|c|c|c|c|c|c|c|c|c|c|c|c|c|}
\hline
$i$ &10&11&12&13&14\\
\hline
$\pi_i(\mm{MO} \langle  8 \rangle )$  & $\mathbb{Z}_6 $ & 0& $\mathbb{Z}$ & $\mathbb{Z}_3$ & $\mathbb{Z}_2$\\
\hline

\end{tabular}
\end{center}
By Thom isomorphism, $H_i(\mm{M}\eta)\cong H_i(X)$ is $\mb{Z}_k$ or $0$ for $i\ge 7$ where $\mm{gcd}(k,2)=1$. Combining these data, we have that $\bar F^{13,0}$ is finite, and $\bar F^{13,0}\cong {_2}\bar F^{13,0}\oplus {_3}\bar F^{13,0}\oplus G$ where the order of any element in $G$ is coprime to $2$ and $3$. 

 Since ${_2}\bar E_2^{i,13-i}=0$, thus ${_2}\bar E_\infty^{i,13-i}=0$ for $5\le i\le 13$. By $$\bar F^{i,13-i}/\bar F^{i-1,14-i}\cong \bar E_\infty^{i,13-i}$$ we have ${_2}\bar F^{13,0}={_2}\bar F^{4,9}$.
\end{proof}

\begin{proof}[Proof of Proposition \ref{filtraofM}-step 1]
Recall that the homotopy groups of $\mm{MO}\langle 8\rangle$ and $H_i(\mm{M}\xi)\cong H_i(\mb{CP}^\infty)$. By the AHSS, we have $$\Omega_{13}^{\mm{O}\langle 8\rangle}(\xi)\cong {_2}F^{13,0}\oplus {_3}F^{13,0}$$
Let $(\beta_1,\beta_2)\in {_2}F^{13,0}\oplus {_3}F^{13,0}$
represent the bordism class $[M,(\nu,f)]$.

By Lemma \ref{newbord}, let 
$$(\alpha_1,\alpha_2,\alpha_3)\in \bar F^{13,0}\cong {_2}\bar F^{13,0}\oplus {_3}\bar F^{13,0}\oplus G$$
represent the bordism class $[M,(\nu,g)]$ where ${_2}\bar F^{13,0}\subset \bar F^{4,9}$. 

 The map $\mm{id}\times h:A=\mm{BO}\langle 8\rangle \times X\to B=\mm{BO}\langle 8\rangle \times \mb{CP}^\infty$
	induces a homomorphism between bordism groups
	$$h_\ast:\Omega_{13}^{\mm{O}\langle 8\rangle}(\eta)\to \Omega_{13}^{\mm{O}\langle 8\rangle}(\xi)$$

	Since $f\simeq h\circ g$, $h_\ast([M,(\nu,g)])=[M,(\nu,f)]$. Thus $h_\ast(\alpha_3)=0$, $h_\ast(\alpha_1)=\beta_1$, $h_\ast(\alpha_2)=\beta_2$. 
 By the naturality of AHSS and $\alpha_1\in \bar F^{4,9}$, we have $\beta_1\in F^{4,9}$. 
\end{proof}

For $p=3$, and the $M$, there exists a space $X'$ with a map $g':M\to X'$ inducing isomorphisms on $H^i$ for $0\le i\le 6$. Moreover, $H^\ast(X')=\mb{Z}[x]/k'x^4$ where $\mm{gcd}(k',3)=1$.
We can take a suitable map $h':X'\to \mb{CP}^\infty$ such that $h'\circ g'\simeq f:M\to \mb{CP}^\infty$. By the same process, we have another bordism group $\Omega_{13}^{\mm{O}\langle 8\rangle}(\eta')$ containing the bordism class $[M,(\nu,g')]$. For this bordism group, we also have the AHSS and its associated filtrations 
$$\Omega_{13}^{\mm{O}\langle 8\rangle}(\eta')=\bar F^{'13,0}\supset \cdots \bar F^{'13-i,i}\supset \cdots$$

Observing the cohomology ring of $X'$, we can also prove
\begin{lemma}\label{newbord2}
 	$\bar F^{'13,0}\cong {_2}\bar F^{'13,0}\oplus {_3}\bar F^{'13,0}\oplus G'$, and  ${_3}\bar F^{'13,0}\subset \bar F^{'4,9}$ where the order of any element in $G'$ is coprime to $2$ and $3$.
 \end{lemma}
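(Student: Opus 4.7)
The plan is to mirror the proof of Lemma \ref{newbord} verbatim, with the roles of the primes $2$ and $3$ interchanged and $X,\eta$ replaced by $X',\eta'$. First I would invoke the Thom isomorphism to identify $H_i(\mathrm{M}\eta')\cong H_i(X')$, and then apply the universal coefficient theorem to the ring $H^\ast(X')\cong\mathbb{Z}[x]/(k'x^4)$; this yields the integral homology of $X'$ as $\mathbb{Z}$ in degrees $0,2,4,6$, a copy of $\mathbb{Z}/k'$ in each odd degree $\ge 7$, and zero in all other degrees. Since $\gcd(k',3)=1$, the $3$-primary part of $H_i(X')$ vanishes for every $i\ge 1$.

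Combining these groups with the table of $\pi_\ast(\mathrm{MO}\langle 8\rangle)$ reproduced in the proof of Lemma \ref{newbord}, I would first verify that each $\bar E_2^{'p,q}$ with $p+q=13$ is a finite abelian group, whence $\bar F^{'13,0}$ is itself finite and splits canonically as ${_2}\bar F^{'13,0}\oplus {_3}\bar F^{'13,0}\oplus G'$, where $G'$ collects the $p$-primary summands for all primes $p\ne 2,3$.

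The core step is to show ${_3}\bar E_2^{'i,13-i}=0$ for every $5\le i\le 13$. By the universal coefficient theorem,
\[
\bar E_2^{'i,13-i}\cong H_i(X')\otimes\pi_{13-i}(\mathrm{MO}\langle 8\rangle)\ \oplus\ \mathrm{Tor}\bigl(H_{i-1}(X'),\pi_{13-i}(\mathrm{MO}\langle 8\rangle)\bigr),
\]
and the $3$-primary part of this sum can be nontrivial only if some operand already carries $3$-torsion. In the range in question, the only values of $q=13-i$ for which $\pi_q(\mathrm{MO}\langle 8\rangle)$ contains $3$-torsion are $q\in\{0,3,8\}$, corresponding to $i\in\{13,10,5\}$; a direct check in each of these three cases---most notably $i=10$, where $\mathrm{Tor}(\mathbb{Z}/k',\mathbb{Z}_{24})=\mathbb{Z}/\gcd(k',24)$ has trivial $3$-part because $\gcd(k',3)=1$---shows the result is $3$-torsion-free. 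Hence ${_3}\bar E_\infty^{'i,13-i}=0$ for $5\le i\le 13$, and the filtration quotients $\bar F^{'i,13-i}/\bar F^{'i-1,14-i}\cong \bar E_\infty^{'i,13-i}$ then give ${_3}\bar F^{'13,0}={_3}\bar F^{'4,9}$, as required. The argument is a direct transcription of Lemma \ref{newbord}, with the arithmetic observation that $\gcd(k',24)$ has no $3$-factor playing the role of the analogous observation about $\gcd(k,24)$; accordingly I anticipate no significant obstacle beyond executing the short finite case-check.
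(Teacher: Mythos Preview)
Your approach is correct and is precisely what the paper does (it merely writes ``Observing the cohomology ring of $X'$, we can also prove'' and leaves the transcription of Lemma~\ref{newbord} to the reader). One small wording issue: $\pi_0=\mathbb{Z}$ and $\pi_8=\mathbb{Z}\oplus\mathbb{Z}_2$ do not literally \emph{contain $3$-torsion}; what you mean is that $q\in\{0,3,8\}$ are exactly the values in $\{0,\dots,8\}$ for which $\pi_q(\mathrm{MO}\langle 8\rangle)$ has nonzero $3$-localization, and that is indeed the correct criterion---together with the fact that $H_\ast(X')$ itself has no $3$-torsion---for isolating the cases that require checking.
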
 
 
 \begin{proof}[Proof of Proposition \ref{filtraofM}-step 2]
 Recall	that $(\beta_1,\beta_2)\in {_2}F^{13,0}\oplus {_3}F^{13,0}$
represents the bordism class $[M,(\nu,f)]\in \Omega_{13}^{\mm{O}\langle 8\rangle}(\xi)$. Applying Lemma \ref{newbord2} and the induced homomorphism $h'_\ast:\Omega_{13}^{\mm{O}\langle 8\rangle}(\eta')\to \Omega_{13}^{\mm{O}\langle 8\rangle}(\xi)$, we have $\beta_2\in F^{4,9}$. This completes the proof.
 \end{proof}

Next we compute the subgroup $F^{4,9}\subset \Omega_{13}^{\mm{O}\langle 8\rangle}(\xi)$.

Take the restriction bundle $\xi|_{4}$ over $\mb{CP}^4$ of $\xi$. Let $\mm{M}\xi|_4$ be the Thom spectra of $\xi|_4$. There is a natural morphism between AHSS
\[
\xymatrix@C=.7cm{
  \tilde{E}_2^{p,q}={H}_p(\mm{M}\xi|_4;\pi_q(\mm{MO}\langle 8\rangle))\ar@{=>}[rr]^-{\tilde d_r}\ar[d]&  & \pi_{p+q}(\mm{MO}\langle 8\rangle\wedge \mm{M}\xi|_4)\ar[d]^-{} \\
{E}_2^{p,q}={H}_p(\mm{M}\xi;\pi_q(\mm{MO}\langle 8\rangle))\ar@{=>}[rr]^-{d_r}& & \pi_{p+q} ( \mm{MO}\langle 8\rangle\wedge \mm{M}\xi)
}
\]
Let $\tilde F^{p,q}$ be a filtration associated with the spectral sequence $\{\tilde E_r^{p,q},\tilde d_r\}$. In particular, $\tilde F^{p,q}/\tilde F^{p-1,q+1}\cong \tilde E_\infty^{p,q}$.
\begin{lemma}\label{Ftilde}
	$\tilde F^{13,0}=\tilde F^{4,9}$.
\end{lemma}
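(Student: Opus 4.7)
The plan is to prove $\tilde F^{13,0}=\tilde F^{4,9}$ by showing that every $\tilde E_\infty^{p,13-p}$ along the total-degree-$13$ diagonal vanishes for $p\ge 5$; since $\tilde F^{p,13-p}/\tilde F^{p-1,14-p}\cong \tilde E_\infty^{p,13-p}$, this forces the successive filtration quotients from $p=13$ down to $p=5$ to be trivial, so the filtration is constant on that range.

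First I will invoke the Thom isomorphism $\tilde H_\ast(\mm{M}\xi|_4)\cong H_\ast(\mb{CP}^4)$ (in the indexing convention already used in the proof of Lemma \ref{newbord}), so that the $E_2$ page takes the explicit form
\begin{equation*}
\tilde E_2^{p,q}\cong H_p(\mb{CP}^4;\pi_q(\mm{MO}\langle 8\rangle)).
\end{equation*}
Since $\mb{CP}^4$ has integral homology concentrated in even degrees $0,2,4,6,8$, the group $\tilde E_2^{p,q}$ is automatically zero whenever $p$ is odd or $p\ge 9$.

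Next I will tabulate the remaining cases on the diagonal $p+q=13$ with $p\ge 5$. The odd values $p=5,7,9,11,13$ vanish by the parity/range restriction on $H_p(\mb{CP}^4)$, and $p=10,12$ vanish for the same reason. The only possibly surviving positions are $p=6$ and $p=8$, but these pair with $q=7$ and $q=5$, respectively, and from the table of $\pi_q(\mm{MO}\langle 8\rangle)$ reproduced in the proof of Lemma \ref{newbord} we have $\pi_5(\mm{MO}\langle 8\rangle)=\pi_7(\mm{MO}\langle 8\rangle)=0$. Hence $\tilde E_2^{p,13-p}=0$ for every $p\ge 5$, so \emph{a fortiori} $\tilde E_\infty^{p,13-p}=0$ for $p\ge 5$.

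Finally, I will chain the identifications $\tilde F^{p,13-p}=\tilde F^{p-1,14-p}$ for $p=13,12,\dots,5$ to conclude $\tilde F^{13,0}=\tilde F^{4,9}$. There is no serious obstacle here: the argument is entirely a bookkeeping exercise in the AHSS once the Thom isomorphism is applied, and no differential needs to be computed because all the relevant $E_2$ entries are already zero.
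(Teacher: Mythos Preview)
Your proof is correct and follows essentially the same approach as the paper: the paper's one-line proof simply asserts that $\tilde E_2^{i,13-i}=0$ for $5\le i\le 13$, and you have spelled out the verification of this vanishing (via the Thom isomorphism, the homology of $\mb{CP}^4$, and the table of $\pi_q(\mm{MO}\langle 8\rangle)$) together with the standard filtration-quotient chase that the paper leaves implicit.
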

\begin{proof}
	This lemma follows from $\tilde E_2^{i,13-i}=0$ for $5\le i\le 13$.
\end{proof}

\begin{lemma}\label{morphtildeto}
	The homomorphism $\tilde{E}_\infty^{i,13-i}\to {E}_\infty^{i,13-i}$ is surjective for $0\le i\le 4$.
\end{lemma}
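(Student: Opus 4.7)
The plan is to reduce the claim to a transparent skeletal identification followed by a short chase on the filtration quotients. The key observation is that $\mathbb{CP}^4$ is the $8$-skeleton of $\mathbb{CP}^\infty$, because $\mathbb{CP}^\infty$ has exactly one cell in each even dimension. Applying the Thom construction to $\xi$, this realizes $\mm{M}\xi|_4$ as the subspectrum of $\mm{M}\xi$ carved out by the bottom cells, and for every skeletal dimension $p$ still inside the cell range of $\mm{M}\xi|_4$ -- in particular for $0\le p\le 4$ -- one has an honest equality of subspectra $\mm{M}\xi|_4^{(p)}=\mm{M}\xi^{(p)}$.

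Both AHSS filtrations are, by construction, images from the skeletal inclusions:
\begin{align*}
\tilde F^{p,m-p} &= \mm{Im}\bigl(\pi_m(\mm{MO}\langle 8\rangle\wedge\mm{M}\xi|_4^{(p)}) \to \pi_m(\mm{MO}\langle 8\rangle\wedge\mm{M}\xi|_4)\bigr), \\
F^{p,m-p} &= \mm{Im}\bigl(\pi_m(\mm{MO}\langle 8\rangle\wedge\mm{M}\xi^{(p)}) \to \pi_m(\mm{MO}\langle 8\rangle\wedge\mm{M}\xi)\bigr).
\end{align*}
For $p\le 4$ the skeletal equality above makes the following triangle commute
\[
\xymatrix{
\pi_m(\mm{MO}\langle 8\rangle\wedge\mm{M}\xi^{(p)}) \ar@{->>}[r] \ar@{->>}[rd] & \tilde F^{p,m-p} \ar[d] \\
& F^{p,m-p}
}
\]
with the horizontal and diagonal arrows surjective by the very definition of the filtrations. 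Since the diagonal factors as the vertical composed with the horizontal, surjectivity of the diagonal forces the induced vertical map $\tilde F^{p,m-p}\to F^{p,m-p}$ to be surjective as well.

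Because the filtration inclusions are natural, this surjection descends to the quotients and delivers
$$\tilde E_\infty^{p,m-p} = \tilde F^{p,m-p}/\tilde F^{p-1,m+1-p} \twoheadrightarrow F^{p,m-p}/F^{p-1,m+1-p} = E_\infty^{p,m-p}.$$
Taking $m=13$ and letting $p$ range over $\{0,1,2,3,4\}$ yields the statement of the lemma. I do not anticipate any substantive obstacle: the only delicate point is the skeletal identification $\mm{M}\xi|_4^{(p)}=\mm{M}\xi^{(p)}$, which holds precisely because we remain in the cell range of $\mm{M}\xi|_4$ (with any Thom-isomorphism shift accounted for exactly as already used in Lemma \ref{Ftilde}). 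Once that is recorded the rest of the argument is purely formal.
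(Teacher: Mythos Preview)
Your proof is correct and takes a somewhat different route than the paper. The paper observes that $\tilde E_2^{i,q} \cong E_2^{i,q}$ for $0 \le i \le 4$ and then simply appeals to a comparison of the two spectral sequences; the implicit argument is a page-by-page induction on $r$, tracking for each page the range of columns in which the comparison map is an isomorphism and those in which it is merely surjective, until the differentials become long enough that first-quadrant vanishing takes over. Your argument bypasses this bookkeeping: you use the skeletal description of the AHSS filtration together with the equality $\mm{M}\xi|_4^{(p)} = \mm{M}\xi^{(p)}$ for small $p$ to obtain surjectivity directly on the filtration pieces $\tilde F^{p,13-p} \twoheadrightarrow F^{p,13-p}$, and then pass to the associated graded. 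This is more transparent and requires no induction on the pages. One small remark: the skeletal equality actually holds for all $p \le 8$ (since $\mb{CP}^4$ is the $8$-skeleton of $\mb{CP}^\infty$), not only for $p \le 4$; you only use $p \le 4$, so this is harmless.
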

\begin{proof}
	Note that $\tilde{E}_2^{i,q}\cong {E}_2^{i,q}$ for $0\le i\le 4$, $q\ge 0$. By comparing two spectral sequences, we finish the proof.  
\end{proof}

\begin{proposition}\label{F49smooth}
	 $F^{4,9}\subset \Omega_{13}^{\mm{O}\langle 8\rangle}(\xi)$ has the structure as follows:
		\begin{enumerate}
		\item If $w_2(\xi)\ne 0$, $F^{4,9}=\mb{Z}_3$ or $0$. When $F^{4,9}=\mb{Z}_3$, its generator can be represented by an exotic $13$-sphere $\Sigma^{13}$.
		\item If $w_2(\xi)\ne 0$ and $p_1(\xi)\not\equiv 0\pmod 3$, $F^{4,9}=0$.
	\end{enumerate}
\end{proposition}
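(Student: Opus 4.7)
The plan is to analyze the piece of the AHSS
\begin{equation*}
E_2^{p,q}\cong H_p(\mm{M}\xi;\pi_q(\mm{MO}\langle 8\rangle))\Longrightarrow \Omega_{p+q}^{\mm{O}\langle 8\rangle}(\xi)
\end{equation*}
along the diagonal $p+q=13$ restricted to $0\le p\le 4$. By the Thom isomorphism $H_p(\mm{M}\xi)\cong H_p(\mb{CP}^\infty)$, which vanishes for odd $p$, together with the table of $\pi_i(\mm{MO}\langle 8\rangle)$ recorded in the proof of Lemma \ref{newbord}, the only possibly nontrivial entries are $E_2^{0,13}=\mb{Z}_3$ and $E_2^{4,9}=\mb{Z}_2\oplus\mb{Z}_2$. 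Hence $F^{3,10}=F^{2,11}=F^{1,12}=F^{0,13}$, and $F^{4,9}$ is an extension of $E_\infty^{4,9}$ (a subquotient of $\mb{Z}_2\oplus\mb{Z}_2$) over $E_\infty^{0,13}$ (a subquotient of $\mb{Z}_3$); since these orders are coprime, the extension splits canonically.

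For part (1), the strategy is to show that $w_2(\xi)\ne 0$ forces $E_\infty^{4,9}=0$. The relevant differentials are the outgoing $d_2\colon E_2^{4,9}\to E_2^{2,10}=\mb{Z}_6$ and the incoming $d_2\colon E_2^{6,8}\to E_2^{4,9}$ from $E_2^{6,8}=\mb{Z}\oplus\mb{Z}_2$, possibly together with $d_4\colon E_4^{8,6}\to E_4^{4,9}$ coming from $E_2^{8,6}=H_8(\mb{CP}^\infty;\mb{Z}_2)=\mb{Z}_2$. On $2$-primary parts these $d_r$ can be computed after Thom iso as Wu-twisted Steenrod operations on $H_*(\mb{CP}^\infty;\mb{Z}/2)$, whose nonvanishing is governed by the Stiefel-Whitney class $w_2(\xi)\in H^2(\mb{CP}^\infty;\mb{Z}/2)$. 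The hypothesis $w_2(\xi)\ne 0$ then annihilates $E_2^{4,9}$, so $F^{4,9}\cong E_\infty^{0,13}\subseteq\mb{Z}_3$.

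To identify the generator when $F^{4,9}=\mb{Z}_3$, I would use that every homotopy $13$-sphere $\Sigma^{13}$ is stably parallelizable, hence admits a canonical normal $\mm{O}\langle 8\rangle$-structure, and composing with the constant map to the basepoint of $\mb{CP}^\infty$ equips $\Sigma^{13}$ with a normal $B$-structure. The resulting map $\Theta_{13}\to\pi_{13}(\mm{MO}\langle 8\rangle)\hookrightarrow\Omega_{13}^{\mm{O}\langle 8\rangle}(\xi)$ factors through $F^{0,13}\subseteq F^{4,9}$ and sends a generator of $\Theta_{13}\cong\mb{Z}_3$ to a generator of $E_\infty^{0,13}$, realizing the asserted exotic-sphere representative.

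For part (2), under the additional hypothesis $p_1(\xi)\not\equiv 0\pmod 3$, the $\mb{Z}_3$ in $E_\infty^{0,13}$ must be killed by a higher differential. The only candidate source in the relevant range is the $\mb{Z}_3$-summand of $E_2^{4,10}=H_4(\mb{CP}^\infty;\mb{Z}_6)=\mb{Z}_6$, and I expect the $3$-local $d_4\colon E_4^{4,10}\to E_4^{0,13}$ to act on this summand as multiplication by $p_1(\xi)\bmod 3$, up to a unit. When $p_1(\xi)\not\equiv 0\pmod 3$ this makes $d_4$ surjective onto $\mb{Z}_3$, so $E_\infty^{0,13}=0$ and $F^{4,9}=0$. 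The hardest part of the proof will be the explicit identification of these AHSS differentials: either via a Wu-type formula for the differentials in $\mm{MO}\langle 8\rangle$-theory of the Thom spectrum $\mm{M}\xi$, or by a direct examination of the low-dimensional Postnikov $k$-invariants of $\mm{MO}\langle 8\rangle$ combined with the specific classes $w_2(\xi)$, $p_1(\xi)\in H^*(\mb{CP}^\infty)$.
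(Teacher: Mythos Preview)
Your outline and the paper take genuinely different routes. The paper does not attempt to compute any AHSS differentials. Instead it restricts $\xi$ to $\mb{CP}^4\subset\mb{CP}^\infty$, compares the two spectral sequences, and uses $H_p(\mb{CP}^4)=H_p(\mb{CP}^\infty)$ for $p\le 8$ together with Lemmas \ref{Ftilde} and \ref{morphtildeto} to show that $\pi_{13}(\mm{MO}\langle 8\rangle\wedge\mm{M}\xi|_4)\to F^{4,9}$ is surjective. The entire computation of $\pi_{13}(\mm{MO}\langle 8\rangle\wedge\mm{M}\xi|_4)$, including both the vanishing of the $2$-primary part when $w_2(\xi)\ne 0$ and the vanishing of the $3$-primary part when $p_1(\xi)\not\equiv 0\pmod 3$, is then outsourced to Lemmas 4.3--4.5 of \cite{Shen}. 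So the paper's proof here is a reduction step plus a black-box citation; what your proposal calls ``the hardest part'' is precisely what the paper does not reproduce.

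Your direct AHSS strategy is coherent and, if carried through, would give a more self-contained argument, but as it stands there is a real gap: you have not actually identified the differentials, and doing so is nontrivial. For part (1), killing \emph{both} $\mb{Z}_2$ summands of $E_2^{4,9}\cong\pi_9(\mm{MO}\langle 8\rangle)$ requires knowing how each summand sits relative to the Postnikov $k$-invariants of $\mm{MO}\langle 8\rangle$; the Wu-twisted $Sq^2$ on the Thom class of $\mm{M}\xi$ controls the $\mm{M}\xi$ side, but whether each $\mb{Z}_2$ in $\pi_9$ supports a nontrivial $d_2$ or $d_4$ depends on the spectrum $\mm{MO}\langle 8\rangle$, and you have not established this. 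For part (2), your expectation that the $3$-local $d_4\colon E_4^{4,10}\to E_4^{0,13}$ is multiplication by $p_1(\xi)\bmod 3$ is plausible (it should ultimately come from the mod-$3$ Wu formula $P^1U=p_1(\xi)U$ on the Thom class together with the relevant $3$-primary $k$-invariant of $\mm{MO}\langle 8\rangle$ between $\pi_{10}$ and $\pi_{13}$), but you would need to pin down that $k$-invariant explicitly, e.g.\ via the Adams spectral sequence at $p=3$. Your identification of the generator of $F^{0,13}$ by an exotic sphere is essentially correct; it rests on the unit map $\pi_{13}^S\to\pi_{13}(\mm{MO}\langle 8\rangle)$ being an isomorphism of $\mb{Z}_3$'s, which is true but also deserves a reference.
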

\begin{proof}
	Consider the following morphism of short exact sequences
	\[
\xymatrix@C=.7cm{
  0\ar[r]^-{}& \tilde F^{i,13-i}\ar[r]\ar[d] & \tilde F^{i+1,12-i}\ar[d]^-{}\ar[r]&\tilde E_\infty^{i+1,12-i}\ar[r]\ar[d]&0 \\
0\ar[r]^-{}&  F^{i,13-i}\ar[r] &  F^{i+1,12-i}\ar[r]& E_\infty^{i+1,12-i}\ar[r]&0
}
\]
where $0\le i\le 3$.
Suppose the left homomorphism is surjective, then by Lemma \ref{morphtildeto}, the middle homomorphism is surjective.
Since $F^{0,13}\cong E_\infty^{0,13}$ and $\tilde F^{0,13}\cong \tilde E_\infty^{0,13}$, we can show that $\tilde F^{i,13-i}\to  F^{i,13-i}$ is surjective for $i=1,2,3,4$ one by one.  Therefore, by Lemma \ref{Ftilde}, the homomorphism $\pi_{13}(\mm{MO}\langle 8\rangle\wedge \mm{M}\xi|_4)\to F^{4,9}\subset \pi_{13}(\mm{MO}\langle 8\rangle\wedge \mm{M}\xi)$ is surjective.
By Lemma 4.3, 4.4, 4.5 in \cite{Shen}, we finish the proof.
\end{proof}

\section{Bordism in PL category}\label{PLcateSec}

Let $\mm{BPL}$ be the classifying space of stable piecewise linear (PL) bundles. There is a natural map $\alpha:\mm{BO}\to \mm{BPL}$. 
We also take the $7$-connected cover $\mm{BPL}\langle 8\rangle$ of $\mm{BPL}$. The map $\alpha$ induces a map $T:\mm{BO}\langle 8\rangle\to \mm{BPL}\langle 8\rangle$. There is a universal bundle $\Gamma_8$ over $\mm{BPL}\langle 8\rangle$. 

 Let $B^{pl}=\mm{BPL}\langle 8\rangle\times \mb{CP}^\infty$ associated with the product bundle $\Gamma_8\times |\xi|$ where $|\xi|$ denote the PL bundle forgetting the structure of $\xi$ as a vector bundle. 
For PL manifolds, we have the same definition as Definition \ref{DefnorB}. All $m$-dimensional PL manifolds that possess normal 
$B^{pl}$-structures, when considered under the cobordant relation and with the disjoint union serving as the operation, form a bordism group
 $\Omega_m^{\mm{PL}\langle 8 \rangle}(|\xi|)$. 
 
 The map $T\times \mm{id}:\mm{BO}\langle 8\rangle\times \mb{CP}^\infty\to \mm{BPL}\langle 8\rangle\times \mb{CP}^\infty$ induces a natural homomorphism $T_\ast:\Omega_m^{\mm{O}\langle 8 \rangle}(\xi)\to \Omega_m^{\mm{PL}\langle 8 \rangle}(|\xi|)$. Obviously, the Thom spectra of $|\xi|$ is $\mm{M}\xi$. Hence $\Omega_m^{\mm{PL}\langle 8 \rangle}(|\xi|)\cong \pi_m(\mm{MPL}\langle 8\rangle\wedge \mm{M}\xi)$ where $\mm{MPL}\langle 8\rangle$ is the Thom spectra of $\Gamma_8$.
 
 \begin{proposition}\label{PLborM}
 	Let $M$ be a simply connected, closed, smooth $13$-manifold with the same cohomology ring as $H^*(\mathbb{CP}^3 \times S^7)$. If $M$ admits a normal $B$-structure as $(\nu,f):M\to B$ (cf. Section \ref{normal5}), then in PL category, the bordism class $[M,(T\circ \nu,f)]$ equals $0$ in $\Omega_{13}^{\mm{PL}\langle 8 \rangle}(|\xi|)$.   
 \end{proposition}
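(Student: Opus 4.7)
The plan is to combine the filtration result of Proposition \ref{filtraofM} with the explicit description of $F^{4,9}$ in Proposition \ref{F49smooth} to reduce the claim to a single computation, namely that an exotic homotopy $13$-sphere equipped with its chosen normal structure dies in the PL bordism group. By Proposition \ref{filtraofM}, the class $[M,(\nu,f)]$ already lies in $F^{4,9}\subset \Omega_{13}^{\mm{O}\langle 8\rangle}(\xi)$, and the smooth-to-PL comparison map $T_\ast$ respects the AHSS filtrations by naturality. Proposition \ref{F49smooth} identifies $F^{4,9}$ as either $0$ or $\mb Z_3$ with generator $[\Sigma^{13},\bar\nu_\Sigma]$ for some exotic $13$-sphere $\Sigma^{13}$ carrying a suitable normal $B$-structure $\bar\nu_\Sigma$. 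The trivial case is immediate, and otherwise it is enough to prove
$$T_\ast\bigl([\Sigma^{13},\bar\nu_\Sigma]\bigr)=0\qquad \text{in}\qquad \Omega_{13}^{\mm{PL}\langle 8\rangle}(|\xi|).$$

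To exhibit a PL null-bordism, I first invoke the PL Poincar\'e theorem of Smale--Stallings--Zeeman in dimensions $\ge 5$: every smooth homotopy $13$-sphere is PL-isomorphic to the standard $S^{13}$, so the underlying PL manifold of $\Sigma^{13}$ bounds the standard PL disk $D^{14}$. It then remains to extend the normal $B^{pl}$-structure $(T\circ\nu_\Sigma,f_\Sigma):\Sigma^{13}\to \mm{BPL}\langle 8\rangle\times \mb{CP}^\infty$ across $D^{14}$. The $\mb{CP}^\infty$-component classifies an element of $H^2(S^{13})=0$, hence is nullhomotopic and extends over $D^{14}$. For the $\mm{BPL}\langle 8\rangle$-component, since $D^{14}$ is contractible its stable PL normal bundle is trivial, and after extending $f_\Sigma$ the partial lift on $S^{13}$ extends across $D^{14}$ by an obstruction-theoretic argument: the obstructions live in the relative cohomology $H^{j+1}(D^{14},S^{13};\pi_j(\text{fibre of } \mm{BPL}\langle 8\rangle\to\mm{BPL}))$, which vanish in the relevant range.

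The hardest step is the last extension argument, because an exotic smooth structure on $S^{13}$ could in principle produce an obstruction to the PL string extension over the disk. The essential geometric content is equivalent to the statement that the natural map $\pi_{13}(\mm{MO}\langle 8\rangle)\to \pi_{13}(\mm{MPL}\langle 8\rangle)$ annihilates the $\mb Z_3$ summand generated by $\Sigma^{13}$, i.e.\ that the smooth exotic class is invisible to PL string bordism. I expect to verify this by analysing the cofibre of $T:\mm{MO}\langle 8\rangle\to \mm{MPL}\langle 8\rangle$ in low degrees, using that the relevant homotopy of $\mm{PL}/\mm{O}$ encodes the group of homotopy spheres. Once this step is confirmed, the chain of reductions yields $T_\ast[M,(\nu,f)]=0$, completing the proof.
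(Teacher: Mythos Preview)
Your reduction via Proposition \ref{F49smooth} has a genuine gap: that proposition carries the hypothesis $w_2(\xi)\ne 0$, which you silently drop. When $p_1(M)$ is even (so $w_2(\xi)=0$), the paper gives no description of the smooth filtration $F^{4,9}$, and in particular you have no right to assume it is generated by an exotic sphere. Thus your argument, as written, only treats the odd-$p_1$ case and leaves the even case completely open.

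The paper avoids this problem by not touching the smooth $F^{4,9}$ at all. Instead it passes immediately to the PL side by naturality of the AHSS, landing $[M,(T\circ\nu,f)]$ in $\mb F^{4,9}$, and then shows $\mb F^{4,9}=0$ directly (Lemma \ref{PLF49}): since $H_p(\mm{M}\xi)\cong H_p(\mb{CP}^\infty)$ vanishes in odd degrees and $\pi_q(\mm{MPL}\langle 8\rangle)=0$ for $q=9,11,13$ by \cite{Shen2}, every $\mb E_2^{p,13-p}$ with $0\le p\le 4$ is zero. This works uniformly, with no case split on $w_2(\xi)$.

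Incidentally, your ``hardest step'' worry is misplaced: in the case $w_2(\xi)\ne 0$ your own obstruction argument already finishes the job. The fibre of $\mm{BPL}\langle 8\rangle\to\mm{BPL}$ has vanishing homotopy in degrees $\ge 8$ by construction of the connected cover, and the only relative group $H^{j+1}(D^{14},S^{13})$ that is nonzero occurs at $j=13$, so all obstructions vanish and the lift extends. There is no need to analyse the cofibre of $\mm{MO}\langle 8\rangle\to\mm{MPL}\langle 8\rangle$; equivalently, the fact you would extract from that analysis is exactly $\pi_{13}(\mm{MPL}\langle 8\rangle)=0$, which is the same input the paper uses (along with $\pi_9$ and $\pi_{11}$) to kill $\mb F^{4,9}$ outright.
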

 
 The homomorphism $T_\ast$ induces a morphism between AHSS
\[
\xymatrix@C=.7cm{
  {E}_2^{p,q}={H}_p(\mm{M}\xi;\pi_q(\mm{MO}\langle 8\rangle))\ar@{=>}[rr]^-{ d_r}\ar[d]&  & \pi_{p+q}(\mm{MO}\langle 8\rangle\wedge \mm{M}\xi)\ar[d]^-{} \\
\mb{E}_2^{p,q}={H}_p(\mm{M}\xi;\pi_q(\mm{MPL}\langle 8\rangle))\ar@{=>}[rr]^-{\mm{d}_r}& & \pi_{p+q} ( \mm{MPL}\langle 8\rangle\wedge \mm{M}\xi)
}
\]
The bottom spectral sequence admits a sequence of filtrations
$$\Omega_m^{\mm{PL}\langle 8\rangle}(|\xi|) =\mb{F}^{m,0}\supset \mb{F}^{m-1,1}\supset \cdots \supset \mb{F}^{m-i,i}\supset \cdots $$
where $\mb{F}^{n-i,i}/\mb{F}^{n-i-1,i+1}\cong \mb{E}_\infty^{n-i,i}$.
\begin{lemma}\label{PLF49}
	$\mb{F}^{4,9}=0$.
\end{lemma}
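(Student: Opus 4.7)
The strategy is to show that the entire filtration $\mb{F}^{4,9}$ is trivial by verifying that each associated graded piece $\mb{E}_\infty^{i,13-i}$ vanishes for $0 \le i \le 4$, since $\mb{F}^{i,13-i}/\mb{F}^{i-1,14-i} \cong \mb{E}_\infty^{i,13-i}$ gives a finite filtration of $\mb{F}^{4,9}$ by these quotients. This mirrors the architecture of the proof of Proposition \ref{F49smooth}, but now in the PL category.

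The odd-degree slots are immediate. By the Thom isomorphism, $\mb{E}_2^{i,13-i} \cong H_i(\mb{CP}^\infty;\pi_{13-i}(\mm{MPL}\langle 8\rangle))$, which vanishes for $i$ odd because $\mb{CP}^\infty$ has no integral homology in odd degrees. Hence $\mb{E}_\infty^{1,12} = \mb{E}_\infty^{3,10} = 0$ without further work.

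For the even slots $i \in \{0,2,4\}$, the task reduces to PL string bordism computations in dimensions $13, 11, 9$. The main input is the Kervaire--Milnor surgery fibration $\mm{PL}/\mm{O} \to \mm{BO}\langle 8\rangle \to \mm{BPL}\langle 8\rangle$, whose fiber has $\pi_n(\mm{PL}/\mm{O}) \cong \Theta_n$ for $n \ge 5$. Comparing the AHSS of $\mm{MPL}\langle 8\rangle \wedge \mm{M}\xi$ with the smooth AHSS used in Section \ref{FiltrationforM} via the natural map $T_\ast$, one observes that any class in $\pi_n(\mm{MO}\langle 8\rangle)$ represented by an exotic sphere — in particular the generator of $\pi_{13}(\mm{MO}\langle 8\rangle) = \mb{Z}_3$ identified in Proposition \ref{F49smooth} — is killed after passing to PL. A parallel restriction-over-$\mb{CP}^4$ argument (as in the proof of Proposition \ref{F49smooth}, using Lemmas \ref{Ftilde} and \ref{morphtildeto}) then reduces surjectivity onto $\mb{F}^{4,9}$ in the PL AHSS to the vanishing of $\pi_{13}(\mm{MPL}\langle 8\rangle)$, $\pi_{11}(\mm{MPL}\langle 8\rangle)$, $\pi_{9}(\mm{MPL}\langle 8\rangle)$, which is the content to verify.

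The hardest case is expected to be $i = 4$, i.e.\ the analysis of $\pi_9(\mm{MPL}\langle 8\rangle)$, since the smooth group $\pi_9(\mm{MO}\langle 8\rangle) = \mb{Z}_2 \oplus \mb{Z}_2$ is not generated by exotic spheres alone and $\Theta_8 = \mb{Z}_2$ contributes a new class via the long exact sequence attached to the cofiber $\mm{MO}\langle 8\rangle \to \mm{MPL}\langle 8\rangle$. Handling this slot requires carefully tracking through the Kervaire--Milnor long exact sequence together with AHSS differentials in the PL AHSS to rule out any surviving contribution to $\mb{E}_\infty^{4,9}$; once this is done, the analogous (and simpler) analyses for $\pi_{11}$ and $\pi_{13}$ conclude the proof.
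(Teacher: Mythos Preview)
Your overall architecture is right and matches the paper: show each $\mb{E}_\infty^{i,13-i}=0$ for $0\le i\le 4$, using $H_{\mathrm{odd}}(\mb{CP}^\infty)=0$ for $i=1,3$ and the vanishing of $\pi_{13},\pi_{11},\pi_9$ of $\mm{MPL}\langle 8\rangle$ for $i=0,2,4$. But the paper's argument is one line shorter than yours in a crucial way: it simply cites \cite{Shen2} for $\pi_i(\mm{MPL}\langle 8\rangle)=0$ when $i=9,11,13$, whence already $\mb{E}_2^{i,13-i}=0$ for all $0\le i\le 4$ and the filtration collapses. There is no need for the restriction-over-$\mb{CP}^4$ maneuver, no comparison with the smooth AHSS, and no differential analysis---those devices from Proposition~\ref{F49smooth} were only needed in the smooth case because $\pi_9(\mm{MO}\langle 8\rangle)$ and $\pi_{13}(\mm{MO}\langle 8\rangle)$ are nonzero.

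The genuine gap in your proposal is that you never actually establish the vanishing you correctly isolate as ``the content to verify.'' Saying that the $i=4$ slot ``requires carefully tracking through the Kervaire--Milnor long exact sequence together with AHSS differentials\ldots once this is done'' is a promissory note, not an argument. The fibration $\mm{PL}/\mm{O}\to \mm{BO}\langle 8\rangle\to \mm{BPL}\langle 8\rangle$ controls the homotopy of the \emph{classifying spaces}, not directly the homotopy of the Thom spectra; passing from $\pi_\ast(\mm{BO}\langle 8\rangle)\to\pi_\ast(\mm{BPL}\langle 8\rangle)$ to $\pi_\ast(\mm{MO}\langle 8\rangle)\to\pi_\ast(\mm{MPL}\langle 8\rangle)$ is exactly the nontrivial computation carried out in \cite{Shen2}. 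In particular, the case $\pi_9(\mm{MPL}\langle 8\rangle)=0$ (where the smooth group is $\mb{Z}_2\oplus\mb{Z}_2$ and $\Theta_8=\mb{Z}_2$, $\Theta_9=(\mb{Z}_2)^3$) does not fall out of a short diagram chase. Either cite that input as the paper does, or supply the actual computation; as written, the proof is incomplete.
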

\begin{proof}
Recall $H_i(\mm{M}\xi)=H_i(\mb{CP}^\infty)$.
	By \cite{Shen2}, $\pi_i(\mm{MPL}\langle 8\rangle)=0$ for $i=9,11,13$. Hence, $\mb{E}_2^{13-i,i}=0$, and thus $\mb{E}_\infty^{13-i,i}=0$ for $9\le i\le 13$. By $\mb{F}^{13-i,i}/\mb{F}^{12-i,i+1}\cong \mb{E}_\infty^{13-i,i}$, we finish the proof.
\end{proof}

\begin{proof}[Proof of Proposition \ref{PLborM}]
By Proposition \ref{filtraofM}, $$[M,(\nu,f)]\in F^{4,9}\subset \Omega_{13}^{\mm{O}\langle 8\rangle}(\xi)$$
By the naturality of AHSS, $[M,(T\circ\nu,f)]\in \mb{F}^{4,9}\subset \Omega_{13}^{\mm{PL}\langle 8\rangle}(|\xi|)$. By Lemma \ref{PLF49}, we complete this proof.	
\end{proof}

\section{Proofs of Theorems \ref{1.1} and \ref{1.3}}\label{proofmain}

Following the surgery in \cite{Shen} and its analogous version in PL category, we have
\begin{lemma}\label{hcobordism}
	If $[M,(\nu,f)]=[M',(\nu',f')]\in \Omega_{13}^{\mm{O}\langle 8\rangle}(\xi)$, $M$ is diffeomorphic to $M'$. If $[M,(T\circ\nu,f)]=[M',(T\circ\nu',f')]\in \Omega_{13}^{\mm{PL}\langle 8\rangle}(|\xi|)$, $M$ is PL homeomorphic to $M'$
\end{lemma}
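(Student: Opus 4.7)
The plan is to apply Kreck's modified surgery theory in the form worked out in \cite{Shen}. Given the equality $[M,(\nu,f)] = [M',(\nu',f')]$ in $\Omega_{13}^{\mm{O}\langle 8\rangle}(\xi)$, the definition of the bordism group supplies a compact smooth $14$-manifold $W$ with $\partial W = M \sqcup (-M')$, together with a lift $\bar{\nu}_W: W \to B = \mm{BO}\langle 8\rangle\times \mb{CP}^\infty$ of the stable normal Gauss map that restricts to $(\nu,f)$ and $(\nu',f')$ on the two boundary components. The goal is to modify $W$ by surgery in its interior, leaving $\partial W$ fixed, so that it becomes an $h$-cobordism; then Smale's $h$-cobordism theorem, applicable since $\dim W = 14 \ge 6$ and $\pi_1(M) = 0$, will yield $M$ diffeomorphic to $M'$.

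The surgery is carried out in two stages. In the first stage, I would kill the relative homotopy groups $\pi_i(W, M)$ for $i \le 6$ by framed surgery below the middle dimension, using the $B$-structure to trivialize the normal bundles of the embedded spheres representing them. This is the easier stage and is facilitated by the observation at the end of Section \ref{normal5} that $(\nu, f)$ and $(\nu', f')$ induce isomorphisms on $H_i$ for $0 \le i \le 6$; between simply connected spaces this is enough to arrange a $7$-equivalence, after which $\bar{\nu}_W$ itself can be promoted to a $7$-equivalence, making the inclusions $M, M' \hookrightarrow W$ into $7$-equivalences.

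The second stage, middle-dimensional surgery, is the main obstacle. One must kill the kernel $K = \ker\bigl(H_7(W;\mb{Z}) \to H_7(B;\mb{Z})\bigr)$, which by Poincar\'{e}--Lefschetz duality is free abelian and paired with $H_7(W, M;\mb{Z})$. The $B$-framing refines the intersection form on $K$ to a quadratic form, and its Witt class in Kreck's monoid $l_{14}(\mb{Z}[\pi_1(B)]) = l_{14}(\mb{Z})$ is the obstruction to turning $W$ into an $h$-cobordism. Because $\pi_1(B) = 0$ and the dimension $14$ comfortably admits the Whitney trick, this obstruction can be resolved by the detailed geometric argument of \cite{Shen}; picking a representative sphere with trivial self-intersection and Arf contribution, then doing the surgery, reduces $K$ to zero, completing the smooth case.

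The PL assertion is proved by the same program transcribed to the PL category. The equality $[M,(T\circ\nu,f)] = [M',(T\circ\nu',f')]$ in $\Omega_{13}^{\mm{PL}\langle 8\rangle}(|\xi|)$ provides a PL $14$-dimensional $B^{pl}$-bordism; surgery below and in the middle dimension is performed on PL handles, with the PL $B^{pl}$-structure again providing the framings; and the resulting PL $h$-cobordism is a product by the PL $h$-cobordism theorem of Stallings, so $M$ is PL homeomorphic to $M'$.
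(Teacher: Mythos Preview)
Your proposal is correct and matches the paper's approach: the paper itself does not prove this lemma but simply writes ``Following the surgery in \cite{Shen} and its analogous version in PL category, we have'' and then states the lemma, so both you and the paper are invoking Kreck's modified surgery as carried out in \cite{Shen}. Your outline of the two-stage surgery (below the middle dimension using the $7$-connectedness of $(\nu,f):M\to B$, then in the middle dimension with obstruction in $l_{14}(\mb{Z})$, followed by the $h$-cobordism theorem) is exactly the content of that reference, and the PL transcription is the expected one.
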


Proofs of (1), (2) and (3) of Theorem \ref{1.1}:
	 Assume $p_1(M)=p_1({M}')$, then the bordism classes $[M,(\nu,f)]$ and $[M',(\nu',f')]$ lie in $\Omega_{13}^{\mm{O}\langle 8\rangle}(\xi)$. Moreover, $[M,(T\circ\nu,f)]$, $[M',(T\circ\nu',f')]\in \Omega_{13}^{\mm{PL}\langle 8\rangle}(|\xi|)$.
	 
By Proposition \ref{PLborM}, we have
$$[M,(T\circ\nu,f)]=[M',(T\circ\nu',f')]=0\in \Omega_{13}^{\mm{PL}\langle 8\rangle}(|\xi|)$$
Lemma \ref{hcobordism} implies that $M$ is PL homeomorphic to $M'$.

If $w_2(M)=w_2(M')\ne 0$, then $w_2(\xi)\ne 0$. 
By Proposition \ref{filtraofM} and \ref{F49smooth} (1), there exists a homotopy $13$-sphere $\Sigma^{13}$ such that $M$ is cobordant to $M'\#\Sigma^{13}$ in $\Omega_{13}^{\mm{O}\langle 8\rangle}(\xi)$. By Lemma \ref{hcobordism}, $M$ is  diffeomorphic to $M'\#\Sigma^{13}$. 	
 	Especially, if $p_1({M})=p_1(M')\not\equiv 0\mod 3$, then $p_1(\xi)\not\equiv 0\mod 3$. By Proposition \ref{F49smooth} (2), $M$ is cobordant to $M'$ in $\Omega_{13}^{\mm{O}\langle 8\rangle}(\xi)$. By Lemma \ref{hcobordism}, $M$ is  diffeomorphic to $M'$.
	 $\Box$

Proof of (4) of Theorem \ref{1.1}:
By \cite{Shen}, every smooth manifold $M$ is homeomorphic to an $S^7$-bundle over $\mb{CP}^3$. In particular, the structure group of this bundle is $\mm{O}(8)$. Applying the homotopy classification of \cite{Shen}, we finish the proof.  
 $\Box$

Proof of (5) of Theorem \ref{1.1}:
Let $M$, $N$ be simply connected, closed, smooth manifolds with the same cohomology ring as $\mb{CP}^3\times S^7$. Moreover, let $p_1(M)=p_1(N)\equiv 4\pmod {24}$.

By Theorem \ref{1.1} (1) and (4), there exist a homeomorphism $h:M\to N$, homotopy equivalences $\alpha:M\to \mb{CP}^3\times S^7$ and $\beta:N\to \mb{CP}^3\times S^7$ such that $\beta \circ h\simeq \alpha$. Consider the surgery exact sequence
$$\mm{P}_{14}\stackrel{\omega}{\to }\bold S(\mb{CP}^3\times S^7)\stackrel{\bold{q}}{\to}[\mb{CP}^3\times S^7,G/O]\to 0$$ 
The pairs $(M,\alpha)$ and $(N,\beta)$ denotes two classes in $S(\mb{CP}^3\times S^7)$.
Following \cite{BasuKaSa}, $\bold q(\alpha)-\bold q(\beta)$ lies in the image of $[\mb{CP}^3\times S^7,\mm{TOP/O}]$ under the homomorphism $[\mb{CP}^3\times S^7, \mm{TOP/O}]\to [\mb{CP}^3\times S^7,G/O]$ where the minus $-$ corresponds the subtraction of the group $[\mb{CP}^3\times S^7,G/O]$. By \cite[Lemma 3.3 (vii)]{BasuKaSa}, this image is $\mb{Z}_2$ and generated by $[\nu^3]$.
Therefore, $$\bold q(\alpha)=\bold q(\beta)+s[\nu^3], \quad \text{$s=0$ or $1$}$$

If $s=0$, then by the surgery exact sequence, $M$ is diffeomorphic to $N\#\Sigma^{13}$ where $\Sigma^{13}\in \mm{bP}_{14}=0$. Hence, $M$ is diffeomorphic to $N$.

If $s=1$, then we consider the pair $(M,f_{\nu^2}\circ \alpha)$ where $f_{\nu^2}:\mb{CP}^3\times S^7\to \mb{CP}^3\times S^7$ is a self-homotopy equivalence (see \cite[Lemma 3.31]{BasuKaSa}). Thus 
$$\bold q(f_{\nu^2}\circ \alpha)=\bold q(f_{\nu^2})+f_{\nu^2}^{-1\ast}\bold q(\alpha)=[\nu^3]+f_{\nu^2}^{-1\ast}\bold q(\alpha)$$
We claim that $f_{\nu^2}^{-1\ast}\bold q(\alpha)= \bold q(\alpha)$. 
Then $$\bold q(f_{\nu^2}\circ \alpha)=[\nu^3]+\bold q(\alpha)=\bold q(\beta)$$
Hence, by the surgery exact sequence, $M$ is diffeomorphic to $N$. 

Now we prove the claim.

Step (1): We compute the isomorphism 
$$f_{\nu^2}^{-1\ast}:[\mb{CP}^3\times S^7,G/\mm{TOP}]\to [\mb{CP}^3\times S^7,G/\mm{TOP}]$$
There is a spectral sequence
$$E_2^{p,q}=H^p(\mb{CP}^3\times S^7;\pi_{-q}(G/\mm{TOP}))\Longrightarrow K^{p+q}_{G/\mm{TOP}}(\mb{CP}^3\times S^7)$$ 
where $K^{0}_{\mm{TOP/O}}(\mb{CP}^3\times S^7)=[\mb{CP}^3\times S^7,G/\mm{TOP}]$. 

Let $H^\ast(\mb{CP}^3\times S^7)=\mb{Z}[x,y]/(x^4,y^2)$ where $\mm{deg}(x)=2$, $\mm{deg}(y)=7$. Since $f_{\nu^2}^{-1}$ is a homotopy equivalence, $f_{\nu^2}^{-1\ast}(x^i)=(\pm x)^i$ for $1\le i\le 3$. Moreover, $f_{\nu^2}^{-1\ast}$ induces isomorphisms on $E_\infty^{p,q}$ for any $p,q$.
Recall that $\pi_{2i+1}(G/\mm{TOP})=0$, $\pi_{4i+2}(G/\mm{TOP})=\mb{Z}_2$, $\pi_{4i}(G/\mm{TOP})=\mb{Z}$. We have: 
\begin{enumerate}
	\item $f_{\nu^2}^{-1\ast}:E_2^{p,-p}\to E_2^{p,-p}$ is the identity for $p=0,2,4,6$.
	\item $E_2^{p,-p}=0$ for $p\ne 0,2,4,6$.
\end{enumerate} 
Therefore, $f_{\nu^2}^{-1\ast}:E_\infty^{p,-p}\to E_\infty^{p,-p}$ is the identity for $p=0,2,4,6$. Since every element in $[\mb{CP}^3\times S^7,G/\mm{TOP}]$ can be represented by the extension of  certain elements in $E_\infty^{p,-p}$ for $p\ge 0$, we have that $f_{\nu^2}^{-1\ast}:[\mb{CP}^3\times S^7,G/\mm{TOP}]\to [\mb{CP}^3\times S^7,G/\mm{TOP}]$ is the identity.

Step (2): By the following commutative diagram 
 \[
\xymatrix@C=.7cm{
  [\mb{CP}^3\times S^7,G/\mm{O}]\ar[rr]^-{f_{\nu^2}^{-1\ast}}\ar[d]^-{T_\ast}&  & [\mb{CP}^3\times S^7,G/\mm{O}]\ar[d]^-{T_\ast} \\
[\mb{CP}^3\times S^7,G/\mm{TOP}]\ar[rr]^-{f_{\nu^2}^{-1\ast}=\mm{id}}& & [\mb{CP}^3\times S^7,G/\mm{TOP}]
}
\] 
we have $T_\ast\circ f_{\nu^2}^{-1\ast}\bold q(\alpha)=T_\ast(\bold q(\alpha))$. By the exact sequence for the fibration $\mm{TOP/O}\to G/\mm{O}\to G/\mm{TOP}$, $f_{\nu^2}^{-1\ast}\bold q(\alpha)=\bold q(\alpha)+k[\nu^3]$ where $k=0$ or $1$. 
Consider the spectral sequence
$$\mm{E}_2^{p,q}=H^p(\mb{CP}^3\times S^7;\pi_{-q}(G/\mm{O}))\Longrightarrow K^{p+q}_{G/\mm{O}}(\mb{CP}^3\times S^7)$$ 
where $K^{0}_{G/\mm{O}}(\mb{CP}^3\times S^7)=[\mb{CP}^3\times S^7,G/\mm{O}]$.
By \cite[Lemma 3.31]{BasuKaSa}, $[\nu^3]$ is provided by $\mm{E}_2^{9,-9}$ in the spectral sequence. Thus $f_{\nu^2}^{-1\ast}\bold q(\alpha)$ and $\bold q(\alpha)$ can be represented by the extension of the same elements in $\mm{E}_\infty^{p,-p}$ for $p\ge 0$ except $p=9$.

Since $\pi_9(G/\mm{O})=\mb{Z}_2\oplus \mb{Z}_2$, $f_{\nu^2}^{-1\ast}:\mm{E}_r^{9,-9}\to \mm{E}_r^{9,-9}$ is the identity for $r\ge 2$. Therefore, $k=0$, and thus $f_{\nu^2}^{-1\ast}\bold q(\alpha)=\bold q(\alpha)$. 
 $\Box$

 \begin{proof}[Proof of Theorem \ref{1.3}]
 	By \cite{Shen}, Theorem \ref{1.3} follows.
 \end{proof}
 
\section{The $S^1_{\bold{a,b}}$ quotient of $S^7\times S^7$}\label{constructforXab}

\begin{proof}[Proof of Proposition \ref{freeS1action}]
	The $S^1_{\bold{a,b}}$-action is smooth. We only need to check that this action does not admit fixed points.
	
	Assume that there exist $\theta\in S^1$ and $(x,y)\in S^7\times S^7$ such that $\theta\cdot (x,y)=(x,y)$. This means $e^{\bold{i}a_i\theta}x_i=x_i$ and $e^{\bold{i}b_i\theta}y_i=y_i$ for each $i$. Since $\Sigma_i |x_i|^2=\Sigma_i |y_i|^2=1$, there exist $x_{i_1}\ne 0$ and $y_{i_2}\ne 0$ where $1\le i_1,i_2\le 4$. Then we have $e^{\bold{i}a_{i_1}\theta}=e^{\bold{i}b_{i_2}\theta}=1$. This implies $\theta=2n\pi/a_{i_1}$ for some $n\in \mb{Z}$, and thus $2n\pi b_{i_2}/a_{i_1}=2m\pi$ for some $m\in \mb{Z}$. Hence $ma_{i_1}=nb_{i_2}$. By $\mm{gcd}(a_{i_1},b_{i_2})=1$, $n/a_{i_1}$ is an integer. So $\theta$ is the unit of $ S^1$. This finishes the proof.  
\end{proof}

Assume that an $S^1_{\bold{a,b}}$-action is free, the quotient space $M_{\bold{a, b}}$ induced by the $S^1_{\bold{a,b}}$-action on $S^7\times S^7$ is a smooth 13-manifold, and admits a canonical principle $S^1$-bundle:
\begin{equation}
	S^1\to S^7\times S^7\to M_{\bold{a,b}} \label{canobundMab}
\end{equation}    
Applying the Gysin sequence \cite[pp.438]{Hatcher} on the above bundle, we have
\begin{lemma}
	 $H^\ast(M_{\bold{a,b}})\cong H^\ast(\mb{CP}^3\times S^7)$ as rings.
\end{lemma}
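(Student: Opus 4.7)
The plan is to exploit the Gysin sequence for the oriented principal circle bundle \eqref{canobundMab}, together with Poincar\'e duality on $M_{\bold{a,b}}$. First I would check the basic setup: the long exact homotopy sequence of $S^1 \to S^7\times S^7 \to M_{\bold{a,b}}$ and $\pi_1(S^7\times S^7)=0$ force $\pi_1(M_{\bold{a,b}})=0$, while freeness and smoothness of the $S^1$-action make $M_{\bold{a,b}}$ a closed orientable smooth $13$-manifold (so Poincar\'e duality applies). Let $e \in H^2(M_{\bold{a,b}})$ denote the Euler class of the bundle. The Gysin sequence reads
$$\cdots \to H^{i-2}(M_{\bold{a,b}}) \xrightarrow{\cup e} H^i(M_{\bold{a,b}}) \xrightarrow{\pi^\ast} H^i(S^7\times S^7) \xrightarrow{\int} H^{i-1}(M_{\bold{a,b}}) \xrightarrow{\cup e} H^{i+1}(M_{\bold{a,b}}) \to \cdots$$
and I will use that $H^i(S^7\times S^7)$ vanishes outside degrees $0,7,14$.

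For $i\le 6$ both $H^i$ and $H^{i-1}$ of $S^7\times S^7$ vanish, so cup product with $e$ gives isomorphisms $H^{i-2}(M_{\bold{a,b}}) \xrightarrow{\cong} H^i(M_{\bold{a,b}})$. Combined with $H^0=\mb Z$ and $H^1=0$, induction yields $H^{2k}(M_{\bold{a,b}})=\mb Z\langle e^k\rangle$ for $0\le k\le 3$ and $H^{2k+1}(M_{\bold{a,b}})=0$ for $0\le k\le 2$. Poincar\'e duality then feeds this back: $H^8(M_{\bold{a,b}})\cong H^5(M_{\bold{a,b}})=0$, $H^9\cong H^4=\mb Z$, $H^{10}\cong H^3=0$, $H^{11}\cong H^2=\mb Z$, $H^{12}\cong H^1=0$, $H^{13}\cong H^0=\mb Z$.

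The only delicate degree is $i=7$. Here the Gysin sequence gives
$$0=H^5(M_{\bold{a,b}}) \xrightarrow{\cup e} H^7(M_{\bold{a,b}}) \to H^7(S^7\times S^7)=\mb Z^{2} \xrightarrow{\int} H^6(M_{\bold{a,b}})=\mb Z \xrightarrow{\cup e} H^8(M_{\bold{a,b}})=0,$$
so $\int$ is surjective by the Poincar\'e duality input $H^8=0$, forcing $H^7(M_{\bold{a,b}})=\ker\int=\mb Z$. Pick a generator $y \in H^7(M_{\bold{a,b}})$. Applying the Gysin sequence once more in the range $9 \le i \le 13$ (where again the cohomology of $S^7\times S^7$ vanishes) shows $\cup e$ produces isomorphisms $H^7\cong H^9\cong H^{11}\cong H^{13}$, so these groups are $\mb Z$-free with generators $y,\,ey,\,e^2y,\,e^3y$ respectively.

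For the ring structure, $e^4\in H^8(M_{\bold{a,b}})=0$ and $y^2\in H^{14}(M_{\bold{a,b}})=0$, so $e^4=y^2=0$, and the module generators already identified show that the graded ring is $\mb Z[e,y]/(e^4,y^2)$, matching $H^\ast(\mb{CP}^3\times S^7)$. The only real obstacle is the degree-$7$ computation, which is exactly where Poincar\'e duality intervenes: without it one has no a priori reason that $\cup e\colon H^6\to H^8$ vanishes, and hence no way to conclude that the transgression $\int\colon H^7(S^7\times S^7)\to H^6(M_{\bold{a,b}})$ is surjective.
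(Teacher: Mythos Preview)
Your argument is correct and follows the same route the paper indicates (the paper's proof is just the single sentence ``Applying the Gysin sequence \ldots''); you have simply supplied the details. Two small remarks: (i) your Poincar\'e duality shortcut ``$H^8\cong H^5$'' is really $H^8\cong H_5$, and one needs the universal coefficient theorem together with $H^5=0$ and $H^6=\mb{Z}$ to conclude $H_5=0$; (ii) one can in fact bypass Poincar\'e duality entirely by running the Gysin sequence from the top: since $H^{14}(M_{\bold{a,b}})=0$ by dimension, the isomorphisms $\cup e$ in degrees $8$--$13$ give $H^{12}=H^{10}=H^{8}=0$ directly, which then feeds back into your degree-$7$ computation. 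Either way the conclusion stands.
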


 There exits a fibre bundle:
\begin{equation}
	S^7\times S^7\hookrightarrow S^\infty\times_{S^1}(S^7\times S^7)\to \mb{CP}^\infty \label{fibrebun-S7S7overCPin}
\end{equation}
where the quotient space $S^\infty\times_{S^1}(S^7\times S^7)$ is induced by the $S^1$-action on $S^\infty\times (S^7\times S^7)$: the right $S^1$-action on $S^\infty$ is the Hopf action, the left $S^1$-action on $S^7\times S^7$ is the $S^1_{\bold {a,b}}$-action. A straightforward verification shows that $S^\infty\times_{S^1}(S^7\times S^7)$ is homotopy equivalent to $M_{\bold{a,b}}$.  

Now we consider the restriction bundle of \eqref{fibrebun-S7S7overCPin} over $\mb{CP}^3$:
\begin{equation}
	\eta_{\bold{a,b}}:S^7\times S^7\hookrightarrow S^7\times_{S^1}(S^7\times S^7)\to \mb{CP}^3 \label{etaab-bundCP4}
\end{equation}
where the right $S^1$-action on $S^7$ is the Hopf action, the left $S^1$-action on $S^7\times S^7$ is the $S^1_{\bold {a,b}}$-action. The space $S^7\times_{S^1}(S^7\times S^7)$ is a $20$-dimensional manifold, denoted by $X_{\bold{a,b}}$. Applying the Serre spectral sequence on the bundle $\eta_{\bold{a,b}}$, we have
\begin{lemma}
	The bundle projection $X_{\bold{a,b}}\to \mb{CP}^3$ induces isomorphisms on $H^k$ for $k\le 6$. 
\end{lemma}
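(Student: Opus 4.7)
The plan is to exploit the extreme sparsity of the Serre spectral sequence associated with the bundle $\eta_{\bold{a,b}}$. Since $H^\ast(S^7\times S^7)\cong \Lambda(a_1,a_2)$ with $|a_i|=7$, the fibre cohomology is concentrated in degrees $0$, $7$ and $14$. Combined with $H^\ast(\mb{CP}^3)\cong \mb{Z}[x]/x^4$, which is concentrated in degrees $0$, $2$, $4$, $6$, the $E_2$-page
$$E_2^{p,q}=H^p(\mb{CP}^3;H^q(S^7\times S^7))$$
is nonzero only at the twelve lattice points with $p\in\{0,2,4,6\}$ and $q\in\{0,7,14\}$.

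Next I would verify that in total degree $\le 6$ no differential can touch the bottom row. An outgoing differential from $E_r^{p,0}$ lands in $E_r^{p+r,1-r}$, a group of negative fibre degree, hence zero. An incoming differential into $E_r^{p,0}$ originates in $E_r^{p-r,r-1}$; for the source to be nonzero we would need $r-1\in\{0,7,14\}$, i.e., $r=8$ or $r=15$ (the case $r=1$ being irrelevant), but then $p-r<0$ for any $p\le 6$. Consequently $E_\infty^{p,0}=E_2^{p,0}=H^p(\mb{CP}^3)$ for $0\le p\le 6$, and all other $E_\infty^{p,q}$ with $p+q\le 6$ vanish because $q\ge 7$ would force $p<0$.

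The final step is to read off the statement from the surviving filtration. In total degree $k\le 6$ only $E_\infty^{k,0}$ survives, so the filtration of $H^k(X_{\bold{a,b}})$ collapses to a single level, and the edge homomorphism — which is precisely the pullback along the bundle projection $X_{\bold{a,b}}\to \mb{CP}^3$ — gives the desired isomorphism $H^k(\mb{CP}^3)\xrightarrow{\cong} H^k(X_{\bold{a,b}})$. There is no genuine obstacle: the argument is dictated entirely by the degree gap between the base and fibre cohomologies, and the only mild care needed is checking that the transgressions into the bottom row really are out of range for $p\le 6$.
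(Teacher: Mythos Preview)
Your proposal is correct and is exactly the approach the paper takes: the paper's proof consists of the single sentence ``Applying the Serre spectral sequence on the bundle $\eta_{\bold{a,b}}$, we have [the lemma],'' and you have supplied the routine details of that spectral sequence computation.
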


Consider the following bundle map
\[
\xymatrix@C=0.8cm{
S^1\ar[d]^-{\Delta}\ar[r] &S^7\times (S^7\times S^7)\ar[r]\ar[d]^-{\mm{id}}&X_{\bold{a,b}} \ar[d]^-{\mathfrak {p}}\\
S^1\times S^1 \ar[r]^-{}& S^7\times (S^7\times S^7)\ar[r] &  \mb{CP}^3\times M_{\bold{a,b}}
}
\]
where $\Delta(\theta)=(\theta,\theta)\in S^1\times S^1$. 
\begin{lemma}\label{tangentXtoCPandM}
	(1) The tangent bundle $\tau(X_{\bold{a,b}})$ of $X_{\bold{a,b}}$ is isomorphic to the Whitney sum $\epsilon^1 \oplus \mathfrak{ p}^\ast(\tau(\mb{CP}^3\times M_{\bold{a,b}}))$ where $\epsilon^1$ is the trivial $\mb{R}^1$-bundle over $X_{\bold{a,b}}$, $\tau(\mb{CP}^3\times M_{\bold{a,b}})$ is the tangent bundle of $\mb{CP}^3\times M_{\bold{a,b}}$.
	
	(2) Let $j_1:\mb{CP}^3\times M_{\bold{a,b}}\to \mb{CP}^3$ and $j_2:\mb{CP}^3\times M_{\bold{a,b}}\to M_{\bold{a,b}}$ be the projections. The composition of the map $\mathfrak p$ and the projection $j_i$ ($i=1,2$) induces isomorphisms on $H^k$ for $k\le 6$.
\end{lemma}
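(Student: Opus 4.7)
\medskip

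\noindent\textbf{Proof proposal for Lemma \ref{tangentXtoCPandM}.}

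The plan is to first identify $\mathfrak{p}\colon X_{\bold{a,b}}\to \mb{CP}^3\times M_{\bold{a,b}}$ as a principal $S^1$-bundle and then read off the two statements from this fibration together with the previously established Serre spectral sequence computation for $\eta_{\bold{a,b}}$. The key observation is that the product $S^7\times(S^7\times S^7)$ carries a free $S^1\times S^1$-action, where the first factor acts by the Hopf action on the first $S^7$ and the second factor acts by $S^1_{\bold{a,b}}$ on $S^7\times S^7$. Quotienting by the diagonal subgroup $\Delta(S^1)$ gives $X_{\bold{a,b}}$, and quotienting by the full $S^1\times S^1$ gives $\mb{CP}^3\times M_{\bold{a,b}}$. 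Since $(S^1\times S^1)/\Delta(S^1)\cong S^1$ still acts freely on $X_{\bold{a,b}}$ with quotient $\mb{CP}^3\times M_{\bold{a,b}}$, the map $\mathfrak{p}$ is a principal $S^1$-bundle.

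For part (1), I would apply the standard splitting of the tangent bundle of a principal $G$-bundle: the short exact sequence
\[
0\longrightarrow V\longrightarrow \tau(X_{\bold{a,b}})\longrightarrow \mathfrak{p}^\ast\tau(\mb{CP}^3\times M_{\bold{a,b}})\longrightarrow 0
\]
in which $V$ denotes the vertical bundle. For a principal $S^1$-bundle, the fundamental vector field associated with a generator of $\mm{Lie}(S^1)$ trivializes $V$, so $V\cong \epsilon^1$. Picking any connection splits the sequence, giving the claimed isomorphism $\tau(X_{\bold{a,b}})\cong \epsilon^1\oplus \mathfrak{p}^\ast\tau(\mb{CP}^3\times M_{\bold{a,b}})$.

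For part (2), I would unwind the two compositions. The map $j_1\circ \mathfrak{p}\colon X_{\bold{a,b}}\to \mb{CP}^3$ sends $[z,(x,y)]$ to the Hopf class $[z]$, so it agrees with the bundle projection of $\eta_{\bold{a,b}}$ in \eqref{etaab-bundCP4}; the claim in this case is exactly the previously stated lemma. For $j_2\circ \mathfrak{p}\colon X_{\bold{a,b}}\to M_{\bold{a,b}}$, sending $[z,(x,y)]$ to $[(x,y)]$, I would check directly that this is a fibre bundle with fibre $S^7$: over a point $[(x,y)]\in M_{\bold{a,b}}$, the preimage equals the $\Delta(S^1)$-quotient of $S^7\times S^1_{\bold{a,b}}\cdot(x,y)\cong S^7\times S^1$, which is diffeomorphic to $S^7$ via $(z,t)\mapsto t^{-1}\cdot z$. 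Applying the Serre spectral sequence for an $S^7$-bundle then yields $H^k(X_{\bold{a,b}})\cong H^k(M_{\bold{a,b}})$ for $k\le 6$.

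The only real bookkeeping obstacle is verifying cleanly that $\mathfrak{p}$ is principal $S^1$ and that the fibres of $j_2\circ \mathfrak{p}$ are genuinely $S^7$; once these are in hand, both (1) and the two isomorphism statements in (2) follow from general principles (tangent bundle splitting for principal circle bundles, and Serre spectral sequences for the bundles $\eta_{\bold{a,b}}$ and $j_2\circ \mathfrak{p}$).
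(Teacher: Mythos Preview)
Your proposal is correct and follows essentially the same route as the paper: both identify $\mathfrak{p}$ as a principal $S^1$-bundle arising from the residual $(S^1\times S^1)/\Delta(S^1)$-action, deduce (1) from the standard tangent-bundle splitting of a principal circle bundle, and obtain (2) by recognising $j_1\circ\mathfrak{p}$ as the projection of $\eta_{\bold{a,b}}$ and $j_2\circ\mathfrak{p}$ as an $S^7$-bundle over $M_{\bold{a,b}}$. The paper merely says ``this bundle implies item (1)'' and ``a straightforward computation shows item (2)''; your write-up supplies exactly the details those phrases suppress.
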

\begin{proof}
	From the above bundle map, we have a principle $S^1$-bundle
$$S^1\to X_{\bold{a,b}}\stackrel{\mathfrak{ p}}{\to } \mb{CP}^3\times M_{\bold{a,b}}$$
This bundle implies the item (1). A straightforward computation shows the item (2).
\end{proof} 

Recall the cohomology rings of $\mb{CP}^3$, $M_{\bold{a,b}}$ and $X_{\bold{a,b}}$. Let $\mathcal V=\mb{CP}^3$, $M_{\bold{a,b}}$ or $X_{\bold{a,b}}$. Note that $x^2=-x\cup -x$ where $x$ is a generator of $H^2(\mathcal V)=\mb{Z}$.
We consistently choose the cup product $x^2$ as the generator of $H^4(\mathcal V)=\mb{Z}$. Consequently, we can represent the first Pontrjagin class  $p_1(\mathcal V)$ of $\mathcal V$ using just an integer. Since the n-th Stiefel-Whitney class $w_n\in H^n(-;\mb{Z}_2)=\mb{Z}_2$ for $n=2,4$, we can denote it by $0$ or $1\in \mb{Z}_2$.

\begin{lemma}\label{firstPonclass-X}
(1) $p_1(X_{\bold{a,b}})=4+\Sigma_i(a_i^2+b_i^2)\in H^4(X_{\bold{a,b}})$
\item (2) $w_2(X_{\bold{a,b}})\equiv \Sigma_i(a_i^2+b_i^2) \pmod 2$
\item (3) $w_4(X_{\bold{a,b}})\equiv \Sigma_i(a_i^2+b_i^2) \pmod 2$	
\end{lemma}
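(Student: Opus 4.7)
\textbf{Proof plan for Lemma~\ref{firstPonclass-X}.}

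The plan is to reduce the computation of characteristic classes of $X_{\bold{a,b}}$ to an explicit Whitney sum of complex bundles on $\mathbb{CP}^3$, pulled back along the bundle projection $\pi\colon X_{\bold{a,b}} \to \mathbb{CP}^3$. Concretely, I would realize $X_{\bold{a,b}}$ as the fiber product $S(W_1)\times_{\mathbb{CP}^3} S(W_2)$, where $W_1 = \bigoplus_{i=1}^4 \mathcal{O}(a_i)$ and $W_2 = \bigoplus_{i=1}^4 \mathcal{O}(b_i)$ are the complex rank-$4$ bundles on $\mathbb{CP}^3$ associated via the Hopf bundle $S^1\to S^7\to \mathbb{CP}^3$ to the $S^1$-representations of weights $\bold a$ and $\bold b$. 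Either this direct identification, combined with the sphere-bundle formula $\tau(S(W))\oplus \epsilon^1 \cong \pi^\ast W_{\mathbb{R}}$ applied to each factor, or equivalently Lemma~\ref{tangentXtoCPandM}(1) combined with the induced $S^1$-equivariant decomposition of $T\mathbb{C}^8|_{S^7\times S^7}$, yields the stable isomorphism
\[
\tau(X_{\bold{a,b}})\oplus \epsilon^2 \cong \pi^\ast\bigl(T\mathbb{CP}^3 \oplus W_1 \oplus W_2\bigr).
\]

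For item (1) I would invoke multiplicativity of the total Chern class and Newton's identity. From $c(T\mathbb{CP}^3)=(1+u)^4$ one reads off $p_1(T\mathbb{CP}^3)=4u^2$, while for each line-bundle sum $p_1(W_j) = \sum_i c_{ij}^2\, u^2$ with $c_{i1}=a_i$, $c_{i2}=b_i$. Additivity of $p_1$ under Whitney sum, together with the identification $\pi^\ast(u)=x\in H^2(X_{\bold{a,b}})$ coming from Lemma~\ref{tangentXtoCPandM}(2), then gives $p_1(X_{\bold{a,b}}) = (4 + \sum_i(a_i^2+b_i^2))x^2$. Item (2) follows from the identification $w_2\equiv c_1\pmod 2$ for stably complex bundles: $c_1(T\mathbb{CP}^3\oplus W_1\oplus W_2) = (4+\sum a_i+\sum b_i)u$, the $4u$ is killed mod $2$, and $\sum a_i+\sum b_i \equiv \sum(a_i^2+b_i^2)\pmod 2$ via $c\equiv c^2 \pmod 2$.

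For item (3) I would use $w_4\equiv c_2\pmod 2$ and expand
\[
c_2(T\mathbb{CP}^3\oplus W_1\oplus W_2) = c_2(T\mathbb{CP}^3) + c_2(W_1) + c_2(W_2) + c_1(T\mathbb{CP}^3)\bigl(c_1(W_1)+c_1(W_2)\bigr) + c_1(W_1)c_1(W_2).
\]
Modulo $2$ the terms $c_2(T\mathbb{CP}^3)=6u^2$ and those carrying $c_1(T\mathbb{CP}^3)=4u$ vanish, leaving $e_2(a)+e_2(b)+(\sum a_i)(\sum b_i)$ times $\bar u^2$ to be compared against $\sum_i(a_i^2+b_i^2)\bmod 2$. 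This reduces (3) to a purely arithmetic mod-$2$ identity in the weights, and establishing that identity by case analysis on the parity patterns of $(\bold a,\bold b)$ permitted by the free-action constraint $\gcd(a_i,b_j)=1$ (which, e.g., forces all $b_j$ to be odd as soon as some $a_i$ is even) is the step I expect to be the principal technical hurdle; I would likely treat separately the cases where all $a_i,b_j$ are odd and where exactly one of the tuples has some even entries.
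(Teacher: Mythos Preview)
Your approach to (1) is essentially the paper's: both establish the stable splitting $\tau(X_{\bold{a,b}})\oplus\epsilon^2\cong\pi^\ast\bigl(T\mb{CP}^3\oplus\gamma\bigr)$ for a rank-$8$ complex bundle $\gamma$ over $\mb{CP}^3$ with Chern roots $a_1,\dots,a_4,b_1,\dots,b_4$ (times $u$), and then read off $p_1$. Your direct line-bundle decomposition $\gamma=W_1\oplus W_2$ is marginally more elementary than the paper's route through Borel's theorem for the Chern classes of $\gamma$, but the content is identical. The paper's written proof in fact stops after (1); your argument for (2) via $w_2\equiv c_1\pmod 2$ is correct and is the natural continuation.

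For (3), however, the mod-$2$ identity you are planning to verify by parity case analysis is \emph{false} under the freeness constraint, so the case analysis cannot close. Take $\bold a=(2,2,1,1)$ and $\bold b=(1,1,1,1)$: all entries are nonzero and $\gcd(a_i,b_j)=1$ throughout, so the action is free. The Chern roots of $T\mb{CP}^3\oplus W_1\oplus W_2$ are $(1,1,1,1,2,2,1,1,1,1,1,1)$ times $u$, hence the total Chern class is $(1+u)^{10}(1+2u)^2$ and
\[
c_2\;=\;\Bigl(\tbinom{10}{2}+10\cdot 4+4\Bigr)u^2\;=\;89\,u^2\;\equiv\;u^2\pmod 2,
\]
so $w_4(X_{\bold{a,b}})=1$. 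On the other hand $\sum_i(a_i^2+b_i^2)=14\equiv 0\pmod 2$. In your own reduction this shows up as $e_2(\bold a)+e_2(\bold b)+(\sum a_i)(\sum b_i)\equiv 1+0+0\not\equiv 0$. Since the paper's proof does not address (3) and (3) is not invoked anywhere else in the paper, this appears to be an error in the lemma's statement rather than a flaw in your method; your plan would have \emph{detected} it rather than proved it.
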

\begin{proof}
	Recall the $S^1_{\bold{a,b}}$-action on $S^7\times S^7$. Then we have the following bundle maps:
	\[
\xymatrix@C=0.8cm{
S^7\times S^7\ar[d]^-{}\ar[r] &\mb{C}^4\times S^7\ar[r]\ar[d]^-{}&\mb{C}^4\times \mb{C}^4 \ar[d]^-{}\\
S^7\times_{S^1}(S^7\times S^7)\ar[d] \ar[r]^-{i_1}& S^7\times_{S^1} (\mb{C}^4\times S^7)\ar[r]^-{i_2}\ar[d] &  S^7\times_{S^1} (\mb{C}^4\times \mb{C}^4)\ar[d]^-{\pi}\\
\mb{CP}^3\ar@{=}[r]& \mb{CP}^3\ar@{=}[r] &\mb{CP}^3
}
\]
Let $Y_1=S^7\times_{S^1} (\mb{C}^4\times S^7)$, $Y_2=S^7\times_{S^1} (\mb{C}^4\times \mb{C}^4)$. We have 
$$i_1^{\ast}(\tau(Y_1))\cong \tau(X_{\bold{a,b}})\oplus \epsilon^1(X_{\bold{a,b}})\quad i_2^{\ast}(\tau(Y_2))\cong \tau(Y_1)\oplus \epsilon^1(Y_1)$$
Hence $p_1(X_{\bold{a,b}})=(i_2\circ i_1)^\ast p_1(Y_2)$.
Since $i_1$ and $i_2$ induce isomorphisms on $H^k$ for $k\le 6$, we only need to compute $p_1(Y_2)$. 

Let $\gamma$ denote the right-hand complex $8$-dimension vector bundle in the above diagram. Let $|\gamma|$ be the underlying real bundle of $\gamma$. We have $\tau(Y_2)\cong \pi^\ast(\tau(\mb{CP}^3)\oplus |\gamma|)$. 
Recall $p_1(\mb{CP}^3)=4\in H^4(\mb{CP}^3)$. We claim that $p_1(|\gamma |)=\Sigma_i(a_i^2+b_i^2)\in H^4(\mb{CP}^3)$. Therefore, $p_1(Y_2)=4+\Sigma_i(a_i^2+b_i^2)\in H^4(Y_2)$. This finishes the proof.

Finally, we prove the claim. 
Let $\gamma^\infty$ be the following bundle
$$\mb{C}^4\times \mb{C}^4\hookrightarrow S^\infty\times_{S^1} (\mb{C}^4\times \mb{C}^4)\to \mb{CP}^\infty$$
such that its restriction bundle over $\mb{CP}^4$ is $\gamma$. Note that $\gamma^\infty$ is the associated bundle of a principal $\mm{U}(8)$-bundle, denoted as $\Theta^{\infty}$. The principal bundle $\Theta^{\infty}$ has the form $$\mm{U}(8)\to S^\infty\times_{S^1} \mm{U}(8)\to \mb{CP}^\infty$$
where the right $S^1$-action on $S^\infty$ is the Hopf action, the left $S^1$-action on $\mm{U}(8)$ is defined by the embedding 
$$S^1\hookrightarrow \mm{U}(8):\theta\to \mm{diag}(e^{\mm{i}a_1\theta},\cdots,e^{\mm{i}a_4\theta},e^{\mm{i}b_1\theta},\cdots,e^{\mm{i}b_4\theta})$$
and the multiplication in $\mm{U}(8)$. 

Let $\kappa:\mb{CP}^\infty \to \mm{BU}(8)$ be the classifying map of $\Theta^\infty$. By the following homotopy fibration,
$$S^1\hookrightarrow \mm{U}(8)\to \mm{U}(8)/S^1\simeq S^\infty\times_{S^1} \mm{U}(8)  \to \mm{B}S^1=\mb{CP}^\infty \stackrel{\kappa}{\longrightarrow }\mm{BU}(8)$$
${\kappa}$ is induced by the above embedding $S^1\hookrightarrow  \mm{U}(8).$ 

Now we recall the classical characteristic theory for convenience.
Let $G$ be a Lie group, and $T^n \subset G$ be a maximal torus in $G$ with induced map $\mathbf i: \mm{BT}^n\to \mm{BG}$. Let $I_G$ be the ring of polynomials in $H^\ast(\mm{BT}^n)$ invariant under the Weyl group $W(G)$.
\begin{theorem}
[Borel's Theorem \cite{Borel1953}] If $H^\ast(G)$ and $H^\ast(G/T^n)$ are torsion-free, then the homomorphism $\mathbf i^\ast: H^\ast(\mm{BG}) \to H^\ast(\mm{BT}^n)$ is a monomorphism with range $I_G$.
\end{theorem}

As was shown in \cite{Borel1953}, the conditions of Borel's Theorem are satisfied for all classical groups.
Recall that $H^\ast(\mm{BU}(8))\cong \Bbb Z[c_1, \cdots, c_8]$ is a polynomial ring on the Chern classses $c_1, \cdots, c_8$. Applying Borel's Theorem to $\mm{U}(8)$, we have that
the homomorphism
$H^\ast(\mm{BU}(8))\to H^\ast(\mm{BT}^8)$
sends  $c_i$ to the elementary symmetric polynomial $\sigma_i(x_1,\cdots,x_8)$ of degree $i$ where $x_j\in H^2(\mm{BT}^8)$, $1\le j\le 8$, are  the generators.
The embedding from $S^1$ into the maximal torus $\mm{T}^8$ is parameterized by $(\bold{a,b})=(a_1,\cdots,a_4,b_1,\cdots,b_4)$. Therefore, $\kappa^*(c_i)= \sigma_i(\bold{a,b})x^i$ where $x\in H^2(\mm{B}{S}^1)\cong \Bbb Z$ is a  generator. 

Since the Chern classes of $\gamma^\infty$ satisfy
\begin{equation}
c_i(\gamma^\infty)=\kappa^*(c_i)=\sigma_i(\bold{a,b})x^i \label{Chernclass}	
\end{equation}
the first and second Chern classes of the complex vector bundle $\gamma$ are $c_1(\gamma)=\sigma_1(\bold{a,b})x$, $c_2(\gamma)=\sigma_2(\bold{a,b})x^2$ where $x$ is a generator of $H^2(\mb{CP}^3)$. Thus $p_1(|\gamma |)=(\sigma_1(\bold{a,b}))^2-2\sigma_2(\bold{a,b})=\Sigma_{i=1}^4(a_i^2+b_i^2)$.
\end{proof}

By Lemma \ref{tangentXtoCPandM} and \ref{firstPonclass-X}, we have
\begin{lemma}\label{firstPonclass-M}
	$p_1(M_{\bold{a,b}})=\Sigma_{i=1}^4(a_i^2+b_i^2)\in H^4(M_{\bold{a,b}})$.
\end{lemma}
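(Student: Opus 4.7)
The plan is to compute $p_1(X_{\bold{a,b}})$ a second way, via Lemma \ref{tangentXtoCPandM}, and compare with the direct computation in Lemma \ref{firstPonclass-X}. By Lemma \ref{tangentXtoCPandM}(1), the tangent bundle of $X_{\bold{a,b}}$ is stably isomorphic to $\mathfrak p^\ast \tau(\mb{CP}^3 \times M_{\bold{a,b}})$, and the product tangent bundle splits as $j_1^\ast \tau(\mb{CP}^3) \oplus j_2^\ast \tau(M_{\bold{a,b}})$. Since the Whitney sum formula for $p_1$ reduces to additivity modulo $2$-torsion, I first need to check that $H^4(X_{\bold{a,b}})$ is torsion-free; this is immediate, because Lemma \ref{tangentXtoCPandM}(2) (or directly the Serre spectral sequence for $\eta_{\bold{a,b}}$) shows $H^4(X_{\bold{a,b}}) \cong H^4(\mb{CP}^3) \cong \mb{Z}$. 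Consequently
\[
p_1(X_{\bold{a,b}}) \;=\; \mathfrak p^\ast\bigl(j_1^\ast p_1(\mb{CP}^3) + j_2^\ast p_1(M_{\bold{a,b}})\bigr).
\]

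Next I would compare generators. By Lemma \ref{tangentXtoCPandM}(2), both $\mathfrak p\circ j_1: X_{\bold{a,b}} \to \mb{CP}^3$ and $\mathfrak p\circ j_2: X_{\bold{a,b}}\to M_{\bold{a,b}}$ induce isomorphisms on $H^k$ for $k\le 6$. Pick degree-two generators $x_{\mb{CP}^3}$, $x_{M}$, $x_{X}$ in these three spaces compatibly under these pullbacks; then the corresponding $x^2$ generators of $H^4$ also correspond. With the convention that $p_1$ is the integer coefficient of $x^2$, and using $p_1(\mb{CP}^3) = 4$, the displayed equation becomes
\[
p_1(X_{\bold{a,b}}) \;=\; \bigl(4 + p_1(M_{\bold{a,b}})\bigr)\, x_X^2.
\]

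Finally, Lemma \ref{firstPonclass-X}(1) gives $p_1(X_{\bold{a,b}}) = \bigl(4 + \Sigma_i(a_i^2 + b_i^2)\bigr) x_X^2$. Comparing coefficients in the torsion-free group $H^4(X_{\bold{a,b}}) \cong \mb{Z}$ yields $p_1(M_{\bold{a,b}}) = \Sigma_i(a_i^2+b_i^2)$, as desired. The only subtle point—and what I expect to be the main obstacle in writing this cleanly—is book-keeping for the generator conventions: one must make sure the fixed $x^2$-generator of $H^4(M_{\bold{a,b}})$ really pulls back to the fixed $x^2$-generator of $H^4(X_{\bold{a,b}})$, but this is automatic since any sign ambiguity in $x$ disappears after squaring.
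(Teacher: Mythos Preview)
Your proposal is correct and is precisely the argument the paper intends: it simply records ``By Lemma \ref{tangentXtoCPandM} and \ref{firstPonclass-X}'' and leaves to the reader exactly the details you have written out. One small notational slip: the compositions should be $j_i\circ\mathfrak p$ rather than $\mathfrak p\circ j_i$, since $\mathfrak p$ maps $X_{\bold{a,b}}$ to $\mb{CP}^3\times M_{\bold{a,b}}$ and the $j_i$ project from there.
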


\begin{proof}[Proof of Theorem \ref{classifyS1action}]
Let 
$$-\bold a=(-a_1,-a_2,-a_3,-a_4)\quad -\bold b=(-b_1,-b_2,-b_3,-b_4)$$
There is a bundle isomorphism
\[
\xymatrix@C=1.2cm{
S^1\ar[d]^-{}\ar[r]^-{-1} &S^1 \ar[d]^-{}\\
S^7\times S^7\ar[d] \ar[r]^-{\mm{id}}& S^7\times S^7\ar[d] \\
M_{\bold{a,b}}\ar[r]^-{\cong}&M_{\bold{-a,-b}}}
\]
Let $c:M_{\bold{a,b}}\to \mb{CP}^\infty$, $-c:M_{\bold{-a,-b}}\to \mb{CP}^\infty$ denote the classifying map of the left bundle (denoted by $\phi$) and the right bundle (denoted by $-\phi$) respectively.
Note that $c$ and $-c$ represent generators of $H^2(M_{\bold{a,b}})=\mb{Z}$ and $H^2(M_{\bold{-a,-b}})=\mb{Z}$ respectively.

 Then we have the following homotopy commutative diagram
\[
\xymatrix@C=1.2cm{
M_{\bold{a,b}}\ar[r]^-{\cong}\ar[d]^-{c}&M_{\bold{-a,-b}}\ar[d]^-{-c}\\
\mb{CP}^\infty \ar[r]^-{-1}& \mb{CP}^\infty
}
\]
where $-1:\mb{CP}^\infty\to \mb{CP}^\infty$ induces the cohomology homomorphism $(-1)^\ast=-\mm{id}$ on $H^2$.

	By Lemma \ref{firstPonclass-M} and Theorem \ref{1.1} (1), there is a homeomorphism $f:M_{\bold{a,b}}\to M_{\bold{\bar a,\bar b}}$.
	
	Let $\bar c:M_{\bold{\bar a,\bar b}}\to \mb{CP}^\infty$ be the classifying map of the principle $S^1$-bundle $\bar \phi$ corresponding the free $S^1_{\bold{\bar a,\bar b}}$-action on $S^7\times S^7$. Moreover, $\bar c$ also represents a generator of $H^2(M_{\bold{\bar a,\bar b}})$. Therefore, either $\bar c \circ f\simeq c$ or $\bar c \circ f\simeq -c$ holds.  
	
	If $\bar c \circ f\simeq c$, then there is a bundle isomorphism between the principle $S^1$-bundles $\phi$ and $\bar \phi$. This implies that there is a self-homeomorphism of $S^7\times S^7$ such that the $S^1_{\bold{a,b}}$, $S^1_{\bold{\bar a,\bar b}}$-actions are equivalent.
	
If $\bar c \circ f\simeq -c$, then we have
\[
\xymatrix@C=1.2cm{
M_{\bold{\bar a,\bar b}}\ar[r]^-{f^{-1}}\ar[d]^-{\bar c}&M_{\bold{a,b}}\ar[r]^-{\cong}\ar[d]^-{c}&M_{\bold{-a,-b}}\ar[d]^-{-c}\\
\mb{CP}^\infty\ar[r]^{-1}&\mb{CP}^\infty \ar[r]^-{-1}& \mb{CP}^\infty 
}
\]
there is a bundle isomorphism between the principle $S^1$-bundles $-\phi$ and $\bar \phi$. This implies that there is a self-homeomorphism of $S^7\times S^7$ such that the $S^1_{\bold{-a,-b}}$, $S^1_{\bold{\bar a,\bar b}}$-actions are equivalent.	

The smooth case of Theorem \ref{classifyS1action} follows by Theorem \ref{1.1} (3) and the same process as above.   
\end{proof}

\end{document}